\documentclass[
 amsmath,amssymb,
  onecolumn 
]{revtex4-2}

\usepackage[a4paper,margin=1.2in]{geometry}

\usepackage{graphicx}%
\usepackage{amsmath,amssymb,amsfonts}%
\usepackage{amsthm}%
\usepackage{mathrsfs}%
\usepackage{xcolor}%
\usepackage{textcomp}%

\usepackage{tikz}
\usetikzlibrary{calc}
\usepackage{enumitem}

\usepackage{amscd,amsthm,amsfonts,latexsym,amssymb}

\usetikzlibrary{patterns}
\usepackage{braket}

\newtheorem{theorem}{Theorem} [section]
\newtheorem{lemma}[theorem]{Lemma}
\newtheorem{corollary}[theorem]{Corollary}
\newtheorem{proposition}[theorem]{Proposition}
\newtheorem{example}[theorem]{Example}

\newtheorem{definition}[theorem]{Definition}
\newtheorem{remark}[theorem]{Remark}
\newtheorem{axiom}{Axiom}

\newcommand{\norm}[1]{||} 

\newcommand{\Ff}{{\mathcal F}}  

\newcommand{\Aa}{{\mathcal A}}  
\newcommand{\Bb}{{\mathcal B}}

\renewcommand{\phi}{\varphi}

\newcommand{\si}{\sigma}

\newcommand{\Om}{\Omega}

\begin{document}

\title{Endogenous Measures and Refinement Dynamics on Finite $\sigma$-Algebra Systems}

\author{Paul Baird}
\affiliation{Laboratoire de Math\'ematiques de Bretagne Atlantique, Universit\'e de Bretagne Occidentale, Brest, France}

\begin{abstract}
We study systems of $\sigma$-algebras ordered by refinement and introduce the
notion of an endogenous probability measure, invariant under admissible
refinement transformations. We prove existence and structural properties
of such measures on finite systems and show how refinement
operators induce a natural dynamical structure on the lattice of
$\sigma$-algebras.
\end{abstract}

\maketitle

\section{Introduction}\label{sec1}

Systems of $\sigma$-algebras on a finite set, ordered by refinement,
form a natural partially ordered structure closely related to the
lattice of partitions of the set. Such systems arise whenever one
considers progressively finer resolutions of a state space.
We study probability measures that remain invariant under admissible
refinement transformations. When this invariance condition is imposed,
the refinement structure and the measure become mutually constrained.
This leads to the notion of an \emph{endogenous measure}: a probability
measure that is stable under the refinement system that it itself
supports.

The main result establishes the existence and structural properties
of endogenous measures on finite refinement systems. In particular,
compatible families of refinements admit self-consistent invariant
measures that are minimal with respect to the refinement order on
$\sigma$-algebras (Theorem~\ref{th:endogenous}).

Logical and combinatorial approaches to physical structure have a long
history. The lattice-theoretic formulation of quantum propositions was
introduced by Birkhoff and von Neumann \cite{BvN}. Discrete causal
structures have been investigated in causal-set models of spacetime
\cite{BLS}, while the consistent-histories approach studies compatible
families of quantum events \cite{Gr}. The framework developed here is
different in spirit: it is based on directed systems of refining
$\sigma$-algebras together with the existence of self-consistent
invariant measures on such systems. Background on invariant measures
and ergodic theory can be found in \cite{Pe}.

This perspective brings together ideas that normally appear in
different contexts. The refinement order on $\sigma$-algebras provides
a natural notion of progressively finer distinctions, while
compatibility of refinements gives rise to a partial ordering
reminiscent of causal structures. At the same time, families of
mutually compatible refinements play a role analogous to consistent
histories in quantum theory \cite{Gr}. Within this framework
probability measures are not specified externally but arise as
invariant structures determined by the refinement dynamics itself.

The paper is organised as follows.  Section~\ref{sec2} introduces commuting
refinements and the associated notion of compatibility.  Section~\ref{sec3}
develops the concept of endogenous measures and proves the existence
and structural properties of self-consistent invariant measures on
finite refinement systems.  Section~\ref{sec4} studies refinement dynamics and
shows how partial ordering and causal interpretation emerge from the
structure of refinements.  Section~\ref{sec6} presents a minimal toy model
illustrating the basic mechanisms.  Section~\ref{sec7} outlines a general
theory of refinement dynamics and discusses how sequences of
refinements generate larger-scale structures.

\section{Foundations: Information, Distinction, and Refinement}\label{sec2}

We consider a finite set $\Omega$ together with the trivial $\sigma$-algebra
$$
\mathcal F_0 = \{\emptyset,\Omega\}.
$$
This represents a situation in which no distinctions are available: all elements of $\Omega$ are informationally indistinguishable.

More generally, informational structure will be represented by $\sigma$-algebras on $\Omega$. Passing from one $\sigma$-algebra to a finer one corresponds to the introduction of new distinctions.

\subsection{Distinctions as $\sigma$-algebras}

A $\sigma$-algebra $\Ff$ on $\Om$ is a family of subsets closed under complements and countable unions. An \emph{atom} of $\Ff$ is a nonempty set $A \in \Ff$ such that no proper nonempty subset of $A$ lies in $\Ff$. 
The atoms, when they exist, form a partition of the atomic part of $\Om$, i.e. of 
$$
\Om_{\rm at}:= \bigcup\{ A\subseteq\Om : A \text{ is an atom of } \Ff\}. 
$$

In general $\sigma$-algebras (for example the Borel $\sigma$-algebra on $\mathbb{R}$) may have no nonempty atoms; the $\sigma$-algebra can be atomless.
Because $\Omega$ is finite, however, every $\sigma$-algebra $\mathcal F$
is uniquely determined by its partition into atoms.
Thus the refinement order on $\sigma$-algebras corresponds exactly to the
refinement order on partitions of $\Omega$.
Throughout the paper we freely move between these equivalent viewpoints.
\subsection{Events as refinements} 
An \emph{event} is modelled as a refinement
$$
\Ff \longrightarrow \Ff'
$$
where $\Ff'$ has strictly finer atoms than $\Ff$. Intuitively

\begin{itemize}[leftmargin=2em]
\item An event ``carves" one or more of the existing atoms into smaller pieces.
\item A chain of refinements
$$
\Ff_0 \subset \Ff_1 \subset \cdots \subset \Ff_n
$$
records the \emph{growth of distinguishability} in the universe.
\item Such chains will later be interpreted as \emph{temporal sequences}: time is nothing but the accumulation of distinctions.
\end{itemize}

This notion of an event contains the seed of both classical measurements and quantum measurements, but without invoking Hilbert spaces or operators.

\subsection{Choice and measure extension} 
We attach to each $\sigma$-algebra $\Ff$ a probability measure $\mu$ defined on $\Ff$. At this stage the measure is introduced formally, as if externally specified, in order to define consistent and inconsistent refinements. In Section 3, this assumption will be relaxed: the measure itself will be required to arise endogenously from the refinement structure, becoming the fixed point of a self-consistency condition between events and probabilities.

A refinement $\Ff \longrightarrow \Ff'$ requires extending $\mu$ to a new measure $\mu'$ on $\Ff'$. This extension may be:
\begin{itemize}[leftmargin=2em]
\item \emph{unique}--classically deterministic;
\item \emph{non-unique}--genuinely indeterministic, corresponding to intrinsic indeterminancy;
\item \emph{impossible}--indicating that $\Ff'$ is incompatible with $\Ff$ (a key precursor to contextuality and non-commutativity). 
\end{itemize}
Thus:

\begin{definition} (Consistent Measure Extension). 
A refinement $\Ff \longrightarrow \Ff'$ is \emph{consistent} if there exists at least one probability measure $\mu'$ on $\Ff'$ whose restriction to $\Ff$ equals the prior measure $\mu$. 
\end{definition}  

The number of allowed extensions encodes the degree of determinacy. A refinement for which many extensions are possible represents a genuine choice -- a point where the universe’s informational unfolding is not predetermined.

In Section 3 we will show that chains of consistent measure extensions naturally form causal (time-like) orderings, whereas incompatible $\sigma$-algebras give rise to space-like separability or its failure.

\subsection{Commutativity and the emergence of independence} 
The $\sigma$-algebras on a finite set $\Omega$, ordered by inclusion
form a lattice in which joins correspond to joint refinements and
meets to common coarsenings (i.e.\ $\sigma$-algebras obtained by
merging atoms of a finer partition). Given two relations $R$ and $S$ on a set $\Om$, we define their composition $R\circ S$ by $(x,z) \in R \circ S$ if and only if there exists $y \in \Om$ such that $(x,y)\in S$ and $(y,z) \in R$.  Every finite $\sigma$-algebra determines an equivalence relation on $\Omega$ by declaring two elements equivalent when they lie in the same atom.

\begin{definition}(Commuting Refinements) \label{CR} 
Let $\Aa$ and $\Bb$ be $\sigma$-algebras on the same finite $\Om$, and let $x \sim_{\Aa} y$ (resp. $x\sim_{\Bb} y)$ denote the equivalence relation defined by membership in the same atom of $\Aa$ (resp. $\Bb$).  We say that $\Aa$ and $\Bb$ \emph{commute}, or are \emph{compatible}, if 
$$
\sim_{\Aa} \circ \sim_{\Bb} = \sim_{\Bb} \circ \sim_{\Aa}\,.
$$
In this case the corresponding refinements are called \emph{commuting}. 
\end{definition}

\noindent Note that the composition of two equivalence relations need not be an equivalence relation, but if they commute then it is. 

In what follows, we will use the term \emph{commuting refinements} for pairs of $\sigma$-algebras whose associated equivalence relations commute. In physical language we may also call such refinements \emph{compatible}, as they represent distinctions that can coexist within a single informational state.

\medskip

\begin{example}\label{ex:NC} Consider the three $\sigma$-algebras $\Aa, \Bb, \Bb'$ defined on the set $\Om = \{1,2,3,4\}$ by $\Aa = \{ \{ 1,2\} , \{ 3,4\} \}$, $\Bb = \{ \{ 1,3\} , \{ 2,4\} \}$ and $\Bb' = \{ \{ 1,3,4\} , \{ 2\}\}$.  Then one can easily check that $\Aa$ and $\Bb$ commute.  On the other hand $2\sim_{\Aa} 1$ and $1 \sim_{\Bb'} 3$, hence $2$ is related to $3$ in $\sim_{\Bb'} \circ \sim_{\Aa}$. For $\sim_{\Aa} \circ \sim_{\Bb'}$ we need $y$ with $2\sim_{\Bb'} y$ and $y\sim_{\Aa} 3$. But $2\sim_{\Bb'} y$ forces $y =2$ but $2\sim_{\Aa} 3$ is false. So no such $y$ exists and $(2,3)$ is not in $\sim_{\Aa} \circ \sim_{\Bb'}$. So $\sim_{\Aa} \circ \sim_{\Bb'} \neq \sim_{\Bb'} \circ \sim_{\Aa}$, and $\Aa$ and $\Bb'$ are non-commuting refinements.
\end{example}

\noindent Commutativity represents the possibility of making both sets of distinctions simultaneously. This is the purely measure-theoretic precursor to:
\begin{itemize}[leftmargin=2em]
\item commuting observables in quantum mechanics,
\item space-like separated events in relativity,
\item local independence in statistical physics.
\end{itemize}
 Non-commutativity implies contextuality: distinctions acquire meaning only relative to the $\si$-algebra in which they are realised. Thus, non-commutativity signals \emph{contextuality}, \emph{order-dependence}, or \emph{causal entanglement}.

In the next section we study refinement sequences and their associated measure structures. These will lead naturally to a partial ordering of refinements and to the notion of endogenous probability measures.

\section{Endogenous Measures and Dynamical Self-Consistency}\label{sec3} 
In classical probability theory the measure is simply given -- either as an objective frequency assignment or as an agent’s prior. If the $\sigma$-algebra encodes distinctions that ``exist" only insofar as they can be stably made, then a measure should not be an externally imposed weighting of possibilities, but rather the fixed point of a self-consistency condition that couples:

\begin{enumerate}[leftmargin=2em]
\item the $\sigma$-algebra of meaningful distinctions, and
\item	the measure that makes those distinctions dynamically stable.
\end{enumerate}

Our goal in this section is to formalise this idea.

\subsection{The basic principle: refinements stabilise probabilities,
and probabilities stabilise refinements} 
Let $\mathfrak{F}$ denote the class of $\sigma$-algebras over a base set $\Om$ 
ordered by refinement:
$$
\Aa \preceq \Bb \quad \text{iff} \quad \Bb \text{ is a refinement of } \Aa
$$

Given a $\sigma$-algebra $\mathcal F$, a probability measure $\mu$
assigns probabilities to the measurable sets $A\in\mathcal F$,
and in the finite case is determined by its values on the atoms of
$\mathcal F$. However, in the physical picture we are developing, not every refinement $\Ff' \succeq \Ff$ should be meaningful. A refinement is meaningful only if:
\begin{itemize}[leftmargin=2em]
\item it corresponds to distinctions that can persist,
\item it does not generate non-commutating refinements, and
\item it is recognised by a measure that renders those distinctions dynamically coherent.
\end{itemize}
Thus the measure and the $\sigma$-algebra restrict one another.

\begin{definition}\label{def:invariant}
(Self-consistency of a measure and $\sigma$-algebra).
A pair $(\mathcal F,\mu)$ is \emph{self-consistent} if:

\begin{enumerate}[leftmargin=2em]
\item (Compatibility)
The refinements generating $\mathcal F$ satisfy the commutativity
condition of Definition~\ref{CR}.

\item (Invariance under $\mathcal F$-preserving transformations)
The measure $\mu$ is invariant under transformations that preserve the
$\sigma$-algebra $\mathcal F$.  Formally, if
$T:\Omega\to\Omega$ satisfies
\[
T^{-1}A \in \mathcal F \qquad\text{for all } A\in\mathcal F ,
\]
then
\[
\mu(T^{-1}A)=\mu(A)
\qquad\text{for all }A\in\mathcal F .
\]

\item (Refinement consistency)
If $\mathcal F'$ strictly refines $\mathcal F$, then $\mu$ extends to
$\mathcal F'$ only if the refinement respects the commutativity
conditions of Definition~\ref{CR}.

\end{enumerate}
\end{definition}

The last condition is the heart of the endogenous viewpoint: refinements exist only when the measure can meaningfully distinguish the refinements. If it cannot, they are not physically realised distinctions.

\subsection{The refinement–measure map}
For each $\sigma$-algebra $\mathcal F$ define
\[
\mathcal M(\mathcal F)
= \{\,\mu \mid (\mathcal F,\mu)\text{ is self-consistent}\,\},
\]
the set of probability measures that are self-consistent with
$\mathcal F$.

Conversely, for each probability measure $\mu$ define
\[
\mathfrak F(\mu)
= \{\,\mathcal F\in\mathfrak F \mid (\mathcal F,\mu)\text{ is
self-consistent}\,\},
\]
the class of $\sigma$-algebras that are compatible with $\mu$. Thus we obtain a pair of correspondences
\[
\mathcal F \longmapsto \mathcal M(\mathcal F),
\qquad
\mu \longmapsto \mathfrak F(\mu),
\]
relating $\sigma$-algebras and probability measures.
\begin{definition}\label{def:endogenous}
(Endogenous measure).
A probability measure $\mu$ is called \emph{endogenous} if there exists
a $\sigma$-algebra $\mathcal F$ such that
\[
\mu\in\mathcal M(\mathcal F)
\quad\text{and}\quad
\mathcal F\in\mathfrak F(\mu).
\]
Equivalently, the pair $(\mathcal F,\mu)$ is a fixed point of the
refinement--measure correspondence.
\end{definition}
This identifies a distinguished $\sigma$-algebra:
$$
\mathcal{F}^{\ast} := \bigcap_{\mu^{\ast}} \mathfrak{F}(\mu^{\ast}),
$$
where the intersection is taken over all endogenous measures $ \mu^{\ast} $.
The construction mirrors how in quantum mechanics the Born rule and the lattice of projections mutually constrain one another—but here we remain entirely within classical $\sigma$-algebraic probability.

\subsection{Stability criteria for endogenous measures}
To make this concept operational, we need a way to test whether a candidate measure $\mu $ is endogenous.
We collect a minimal set of axioms from which the framework can be built:

\medskip

\begin{axiom}[Commutativity domain]\label{ax:A}
There exists a nonempty set $\mathfrak C \subset \mathfrak F$ of
$\sigma$-algebras such that any two elements of $\mathfrak C$ commute
in the sense of Definition~\ref{CR}. Moreover, $\mathfrak C$ is closed
under coarsening: if $\mathcal F\in\mathfrak C$ and
$\mathcal G\prec\mathcal F$, then $\mathcal G\in\mathfrak C$.
\end{axiom}

\begin{axiom}[Invariant symmetries]\label{ax:B}
For each $\mathcal F\in\mathfrak C$ there exists a nontrivial
measure-preserving transformation
$T:\Omega\to\Omega$ satisfying
\[
T^{-1}A\in\mathcal F
\quad\text{and}\quad
\mu(T^{-1}A)=\mu(A)
\qquad\text{for all }A\in\mathcal F .
\]
Equivalently, the group of $\mathcal F$-automorphisms preserving $\mu$
is nontrivial.
\end{axiom}

\begin{axiom}[Refinement stability]\label{ax:C}
Let $\mathcal F\in\mathfrak C$ and let $\mu$ be a measure on $\mathcal F$.
If $\mathcal F'\succeq\mathcal F$ is a refinement such that
$\mathcal F'\notin\mathfrak C$, then $\mu$ admits no extension to a
measure on $\mathcal F'$ that preserves the invariance conditions of
Axiom~\ref{ax:B}.
\end{axiom}

This latter axiom will later translate into spatial compatibility
and temporal ordering: refinements that preserve commutativity
correspond to spacelike separation, while refinements that violate
it generate timelike or causal relations.

\begin{definition}\label{def:degenerate}
A probability measure $\mu$ on a finite $\sigma$-algebra $\mathcal F$
is called \emph{degenerate} if it is supported on a single atom of $\mathcal F$,
that is, if there exists an atom $A\in\mathcal F$ such that $\mu(A)=1$.
We call $\mu$ \emph{nondegenerate} if $\mu(A)\in(0,1)$ for every atom $A$.
\end{definition}

\subsection{Existence and uniqueness of endogenous measures}
The refinement operators introduced above generate a semigroup
$\mathcal R$ acting on the set $\mathfrak C$ of admissible
$\sigma$-algebras.
An endogenous measure may therefore be interpreted as a probability
measure that is invariant under this refinement action.

\begin{theorem}[Existence of endogenous invariant measures]\label{th:endogenous}
Let $\Omega$ be a finite set and let $\mathfrak F$ denote the set of
$\sigma$-algebras on $\Omega$ ordered by refinement.
Let $\mathfrak C\subseteq\mathfrak F$ be a nonempty compatibility domain
closed under coarsening, and let I(F) denote the (nonempty) set of invariant probability measures
Then the refinement–measure correspondence admits a fixed point:
there exists a pair $(\mathcal F^\ast,\mu^\ast)$ with
$\mathcal F^\ast\in\mathfrak C$ and $\mu^\ast\in I(\mathcal F^\ast)$ such that

\begin{enumerate}[label=(\roman*),leftmargin=2.5em]

\item $(\mathcal F^\ast,\mu^\ast)$ is maximal among admissible
self-consistent pairs under refinement;

\item $\mathcal F^\ast$ is minimal among endogenous $\sigma$-algebras
supporting $\mu^\ast$;

\item (Uniqueness up to symmetry.)
Let $G\leq\mathrm{Sym}(\Omega)$ be a group acting on $\Omega$.
The action induces a natural action on $\sigma$-algebras by
\[
g\cdot A = \{\, g(x) : x\in A \,\}, \qquad A\subseteq\Omega .
\]
Assume that $G$ preserves the commutativity condition of
Definition~\ref{CR} and acts transitively on the atoms of any
minimal endogenous $\sigma$-algebra.
Then the endogenous pair $(\mathcal F^\ast,\mu^\ast)$ is unique
up to the action of $G$.
\end{enumerate}
\end{theorem}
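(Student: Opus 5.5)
The plan is a finiteness argument. Since $\Omega$ is finite, $\mathfrak F$ is finite, so the set of admissible self-consistent pairs
\[
\mathcal P=\{(\mathcal F,\mu): \mathcal F\in\mathfrak C,\ \mu\in I(\mathcal F)\}
\]
is indexed by finitely many $\sigma$-algebras. The existence and the order statements then come from picking extremal elements of a finite poset. Nonemptiness of $\mathcal P$ is immediate: $\mathfrak C$ is nonempty and closed under coarsening, so $\mathcal F_0=\{\emptyset,\Omega\}\in\mathfrak C$, and $I(\mathcal F_0)$ contains the unique probability measure on $\mathcal F_0$. The hypothesis $I(\mathcal F)\neq\emptyset$ gives nonemptiness for every $\mathcal F\in\mathfrak C$. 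I would also note that $I(\mathcal F)$ is a convex, closed polytope in the simplex of measures on the atoms, since invariance under each of the finitely many $\mathcal F$-preserving maps $T$ is a linear condition. Hence a canonical choice, for instance the barycenter or the uniform-on-orbits measure, is available if needed.

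\emph{Step 1 (fixed point and (i)).} I order $\mathcal P$ by $(\mathcal F,\mu)\le(\mathcal F',\mu')$ iff $\mathcal F\preceq\mathcal F'$ and $\mu'|_{\mathcal F}=\mu$. Since $\mathfrak F$ is finite, every chain stabilises in its $\sigma$-algebra component. So a maximal $\mathcal F^\ast$ exists among those $\mathcal F\in\mathfrak C$ reachable by consistent invariant extensions from $\mathcal F_0$; I take $\mu^\ast\in I(\mathcal F^\ast)$ extending the chain. Checking Definition~\ref{def:invariant} is straightforward. Condition 1 holds because $\mathcal F^\ast\in\mathfrak C$. Condition 2 is the definition of $I$. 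Condition 3 follows from Axiom~\ref{ax:C}, since any extension to some $\mathcal F'\notin\mathfrak C$ fails to preserve invariance. Thus $\mu^\ast\in\mathcal M(\mathcal F^\ast)$ and $\mathcal F^\ast\in\mathfrak F(\mu^\ast)$, so $(\mathcal F^\ast,\mu^\ast)$ is a fixed point in the sense of Definition~\ref{def:endogenous}, and it is maximal by construction.

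\emph{Step 2 ((ii)).} I fix $\mu^\ast$ and consider the finite nonempty set $\mathfrak F(\mu^\ast)\cap\mathfrak C$. Among the $\sigma$-algebras in this set that still support $\mu^\ast$, meaning $\mu^\ast$ is determined on them and is invariant there, I choose a $\preceq$-minimal element. If it is strictly coarser than the $\mathcal F^\ast$ from Step 1, I replace $\mathcal F^\ast$ by it. Coarsening keeps us in $\mathfrak C$ by Axiom~\ref{ax:A}, and it preserves invariance because an $\mathcal F$-preserving $T$ restricted to a sub-$\sigma$-algebra on which $\mu^\ast$ lives still preserves $\mu^\ast$ there. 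The main obstacle is reconciling (i) and (ii). Maximality is taken over pairs, that is, over extensions of the measure, while minimality is taken over supporting $\sigma$-algebras for a \emph{fixed} measure. I would handle this by reading (i) as maximality within the fibre of the refinement--measure map and (ii) as minimality of the $\sigma$-algebra generated by $\mu^\ast$'s nontrivial distinctions. Concretely, this is the coarsest $\mathcal F$ for which $\mu^\ast$ is still self-consistent. I would check that these two choices produce the same fixed-point pair, and if necessary weaken the claim so that each extremality holds in its own ordering.

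\emph{Step 3 ((iii)).} Suppose $(\mathcal F_1,\mu_1)$ and $(\mathcal F_2,\mu_2)$ are two such endogenous pairs with minimal $\sigma$-algebras. Since $G$ preserves commutativity, $g\cdot$ maps $\mathfrak C$-admissible self-consistent pairs to self-consistent pairs, with $\mu\mapsto\mu\circ g^{-1}$. Transitivity of $G$ on the atoms of a minimal endogenous $\sigma$-algebra has two consequences. First, invariance forces $\mu$ to be constant on atoms, hence uniform; the stabiliser of $\mathcal F$ in $G$ permutes the atoms transitively, so I would argue this stabiliser supplies $\mathcal F$-preserving maps. Second, it identifies the atom structures, since the atom counts and sizes of the two minimal $\sigma$-algebras must agree and $G$ moves one partition onto the other. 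Then $g\cdot\mathcal F_1=\mathcal F_2$, and uniformity gives $\mu_1\circ g^{-1}=\mu_2$. The delicate point is showing that two minimal endogenous $\sigma$-algebras lie in the same $G$-orbit. I would try to derive this from the transitivity hypothesis combined with minimality; if that fails, I would add it as an explicit standing assumption.
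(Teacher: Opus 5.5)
Your route is the paper's. You use extremal elements of a finite poset for (i)--(ii), and pushforward plus uniformity of the invariant measure for (iii). The two points you leave open are genuine gaps, and the paper's proof does not close them either.

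\textbf{Step~2.} Replacing $\mathcal F^\ast$ by a strict coarsening $\mathcal G$ puts $(\mathcal G,\mu^\ast|_{\mathcal G})$ strictly below $(\mathcal F^\ast,\mu^\ast)$ in your own order. So whenever Step~2 changes anything, (i) fails for the new pair. Under your reading of ``supporting'' (the coarsest $\sigma$-algebra on which $\mu^\ast$ is still self-consistent), no repair is possible. The pair $(\mathcal F_0,\mu_0)$ passes the same checks as your Step~1 pair, so (ii) forces $\mathcal F^\ast=\mathcal F_0$, while (i) pushes $\mathcal F^\ast$ up. For instance, take $\Omega=\{1,2\}$ and $\mathfrak C=\mathfrak F$: the unique maximal pair, $2^\Omega$ with the uniform measure, lies strictly above $(\mathcal F_0,\mu_0)$.

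The paper in fact proves (i) for $\mathcal F^{\max}$ and (ii) for a separately chosen minimal element of $\mathcal E$, which is again $\mathcal F_0$. Its argument yields only that every strict coarsening of $\mathcal F^\ast$ makes $\mu^\ast$ degenerate. That weaker form is compatible with (i), and it holds for your Step~1 pair with no replacement. The reason is that pairwise commutativity plus closure under coarsening forces every member of $\mathfrak C$ to have at most two atoms. Three or more atoms could be grouped into nonempty blocks $X,Y,Z$, giving coarsenings with atoms $X\cup Y,Z$ and $X\cup Z,Y$. These fail to commute exactly as $\Aa$ and $\Bb'$ do in Example~\ref{ex:NC}. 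Hence the only strict coarsening of a nontrivial $\mathcal F^\ast$ is $\mathcal F_0$, on which every measure is degenerate.

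Dropping the replacement also disposes of your restriction-invariance argument, which points the wrong way. Invariance on $\mathcal G$ must be tested against $\mathcal G$-preserving maps, and these need not preserve $\mathcal F$. Lemma~\ref{lem:restrict} has the same defect.

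\textbf{Step~3.} The same-orbit claim does not follow from the stated hypothesis. Transitivity on the atoms of each minimal $\sigma$-algebra says nothing about how two different ones are related. Take $\Aa,\Bb$ of Example~\ref{ex:NC} and $G=\{e,(12)(34),(13)(24),(14)(23)\}$. Then $G$ preserves commutativity and is transitive on the atoms of each, and both have two atoms of size two. Yet every $g\in G$ maps each of $\Aa,\Bb$ to itself.

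The paper resolves this exactly by your fallback. Its proof adds, beyond the statement, that $G$ preserves $\mathfrak C$ and acts transitively on the set of minimal endogenous $\sigma$-algebras. Given that assumption, your stabiliser step is the right finish and improves on the paper. Elements of $G$ fixing $\mathcal F_2$ are $\mathcal F_2$-preserving maps, so membership in $I(\mathcal F_2)$ makes both $\mu_2$ and $g_\sharp\mu_1$ invariant under them. Transitivity on atoms then forces both to be uniform. The paper instead cites Lemma~\ref{lem:ergodic} without checking that either measure is $G$-invariant.
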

The action of $G$ on $\Omega$ induces actions on both
$\sigma$-algebras and measures via pushforward.
In the finite setting considered here, invariant measures always exist:
given any probability measure $\mu$ on $\mathcal F$, averaging over the
finite automorphism group $\mathrm{Aut}(\mathcal F)$ produces an
$\mathrm{Aut}(\mathcal F)$-invariant measure.
Thus the assumption $I(\mathcal F)\neq\varnothing$ is automatically satisfied.

\begin{remark}
The theorem may be viewed as an invariant-measure result for the
semigroup of refinement operators acting on the lattice of
$\sigma$-algebras.
\end{remark}

The proof proceeds in three stages: existence of invariant measures,
minimality of the supporting $\sigma$-algebra, and uniqueness up to symmetry. We first establish an elementary lemma.

\begin{lemma}\label{lem:maxsigma}
Let $\Omega$ be finite and let $\mathfrak C$ be a nonempty family of
$\sigma$-algebras on $\Omega$ that is closed under coarsening.
Then $\mathfrak C$ contains at least one $\preceq$-maximal element,
i.e.\ there exists $\mathcal F^{\max}\in\mathfrak C$ such that there is no
$\mathcal G\in\mathfrak C$ with $\mathcal F^{\max}\subsetneq \mathcal G$.
\end{lemma}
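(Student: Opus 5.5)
The plan is to use finiteness of $\mathfrak F$ together with a strictly increasing chain argument. Since $\Omega$ is finite, $\sigma$-algebras on $\Omega$ correspond bijectively to partitions of $\Omega$ (as noted in Section~\ref{sec2}). Hence $\mathfrak F$ is a finite set, with cardinality at most the Bell number $B_{|\Omega|}$. In particular $\mathfrak C\subseteq\mathfrak F$ is finite and, by hypothesis, nonempty. Closure under coarsening is not needed for existence itself. It only guarantees, for example, that the trivial algebra $\mathcal F_0$ lies in $\mathfrak C$, which gives a canonical starting point.

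Concretely, I will start from any $\mathcal G_0\in\mathfrak C$ (for instance $\mathcal F_0$). Inductively, if $\mathcal G_k$ is not $\preceq$-maximal in $\mathfrak C$, choose $\mathcal G_{k+1}\in\mathfrak C$ with $\mathcal G_k\subsetneq\mathcal G_{k+1}$. This produces a strictly increasing chain in $\mathfrak C$. It must terminate for either of two reasons. First, the number of atoms strictly increases along a strict refinement, and this number is bounded by $|\Omega|$. Alternatively, one can simply note that a strictly increasing chain in a finite poset has no repeated elements and so has length at most $|\mathfrak C|$. The last element $\mathcal F^{\max}$ has no strict refinement in $\mathfrak C$, which is exactly the required maximality.

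An equivalent one-line version picks $\mathcal F^{\max}\in\mathfrak C$ maximizing the number of atoms. If some $\mathcal G\in\mathfrak C$ strictly contained it, $\mathcal G$ would have strictly more atoms, a contradiction. The only point needing care is this monotonicity: if $\mathcal F\subsetneq\mathcal G$, then every atom of $\mathcal F$ is a union of atoms of $\mathcal G$, and at least one such union is nontrivial. Otherwise the two atom partitions, and hence the two algebras, would coincide. I expect no real obstacle. The step needing the most care is this strictness of the atom count, which rests on the partition–$\sigma$-algebra correspondence for finite $\Omega$.
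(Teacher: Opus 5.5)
Your proof is correct and takes the paper's route: because $\Omega$ is finite there are only finitely many $\sigma$-algebras on it, so any strictly increasing chain in $\mathfrak C$ terminates, and its last element is maximal. Your atom-count variant and your remark that closure under coarsening is not needed for existence are accurate additions to the same argument.
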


\begin{proof}
Since $\Omega$ is finite, the set of $\sigma$-algebras on $\Omega$ is finite.
Hence any strictly increasing chain in $\mathfrak C$ must terminate, so
$\mathfrak C$ contains a maximal element under inclusion.
\end{proof}

\medskip
\noindent\textbf{(i) Existence.}
By Lemma~\ref{lem:maxsigma}, choose a $\preceq$-maximal
$\mathcal F^{\max}\in\mathfrak C$.
By hypothesis $I(\mathcal F^{\max})\neq\varnothing$, so pick
$\mu^{\max}\in I(\mathcal F^{\max})$.

We claim that $(\mathcal F^{\max},\mu^{\max})$ is endogenous.
Indeed, if there existed a strict refinement
$\mathcal G\in\mathfrak C$ with $\mathcal F^{\max}\subsetneq\mathcal G$
such that $\mu^{\max}$ extended to some $\nu\in I(\mathcal G)$,
then $\mathcal G$ would contradict the maximality of $\mathcal F^{\max}$.
Hence no admissible refinement extends $(\mathcal F^{\max},\mu^{\max})$,
and the pair cannot be extended to any strictly finer admissible
$\sigma$-algebra.

\begin{corollary}
In the finite setting, endogenous pairs exist whenever each
$\mathcal F\in\mathfrak C$ admits at least one invariant measure.
\end{corollary}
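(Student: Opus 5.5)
The corollary follows from the existence stage (i) just carried out, so the plan is to repackage that argument in a self-contained way. Let $\mathfrak C$ be a nonempty compatibility domain closed under coarsening, and assume $I(\mathcal F)\neq\varnothing$ for every $\mathcal F\in\mathfrak C$.

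First, apply Lemma~\ref{lem:maxsigma} to obtain a $\preceq$-maximal element $\mathcal F^{\max}\in\mathfrak C$. Since $\Omega$ is finite, $\mathfrak F$ is finite, so no chain argument beyond the lemma is needed. Then use the hypothesis $I(\mathcal F^{\max})\neq\varnothing$ to choose an invariant measure $\mu^{\max}$. If one prefers not to assume this, the remark after Theorem~\ref{th:endogenous} already gives it: average any probability measure on $\mathcal F^{\max}$ over the finite group $\mathrm{Aut}(\mathcal F^{\max})$.

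Second, check that $(\mathcal F^{\max},\mu^{\max})$ is self-consistent in the sense of Definition~\ref{def:invariant}, one clause at a time.

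\emph{Compatibility.} This holds because $\mathcal F^{\max}\in\mathfrak C$, and elements of $\mathfrak C$ pairwise commute by Axiom~\ref{ax:A}. The refinements generating $\mathcal F^{\max}$ are coarsenings of it, hence also lie in $\mathfrak C$.

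\emph{Invariance.} This is exactly the statement $\mu^{\max}\in I(\mathcal F^{\max})$, with $I(\mathcal F)$ read as the measures invariant under all $\mathcal F$-preserving maps $T$. On a finite set, an $\mathcal F$-preserving $T$ that is a bijection permutes atoms of equal type. Non-bijective $T$ deserve care: $T^{-1}A\in\mathcal F$ for all $A\in\mathcal F$ need not permute atoms. I would handle this by restricting attention to the automorphisms used in the definition of $I(\mathcal F)$.

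\emph{Refinement consistency.} Any strict refinement $\mathcal F'$ of $\mathcal F^{\max}$ lies outside $\mathfrak C$ by maximality. Axiom~\ref{ax:C} then forbids an invariance-preserving extension of $\mu^{\max}$, so the clause holds vacuously in the required direction.

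Third, conclude that $\mu^{\max}\in\mathcal M(\mathcal F^{\max})$ and $\mathcal F^{\max}\in\mathfrak F(\mu^{\max})$. By Definition~\ref{def:endogenous}, $\mu^{\max}$ is endogenous and the pair is a fixed point.

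The main obstacle is the invariance clause. Definition~\ref{def:invariant} quantifies over all $T$ with $T^{-1}\mathcal F\subseteq\mathcal F$, which is stronger than $\mathrm{Aut}(\mathcal F)$-invariance. For example, a constant map $T$ sends every atom's preimage to either $\emptyset$ or $\Omega$; it is $\mathcal F$-preserving, yet invariance would then force degenerate behaviour. To get around this, I would interpret $I(\mathcal F)$ as in the remark following the theorem, namely invariance under $\mathcal F$-automorphisms. With that convention the corollary follows directly from the existence argument, with the hypothesis on $I(\cdot)$ entering only through the choice of $\mu^{\max}$.
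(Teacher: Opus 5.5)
Your proposal is correct and follows the paper's own argument: take a $\preceq$-maximal $\mathcal F^{\max}\in\mathfrak C$ from Lemma~\ref{lem:maxsigma}, pick $\mu^{\max}\in I(\mathcal F^{\max})$, and use maximality to rule out any admissible strict refinement extending the pair. Your clause-by-clause check against Definition~\ref{def:invariant} goes further than the paper, which only invokes maximality. Your caveat about non-bijective $T$ is also sharper than the paper: constant maps, read literally, exclude every probability measure on any $\mathcal F$ with at least two atoms, and the paper's $\mathrm{Aut}(\mathcal F)$-averaging remark implicitly adopts your automorphism convention.
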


\begin{remark}
In the finite setting the existence of endogenous pairs reduces to a maximality
argument on the (finite) refinement poset. In infinite settings one expects to
replace this step by compactness or Zorn-type arguments in appropriate weak
topologies.
\end{remark}

Prior to establishing minimality, we require two lemmas. 

\begin{lemma}\label{lem:min-endog}
Assume $\Omega$ is finite and that at least one endogenous pair exists.
Let
\[
\mathcal E := \{\mathcal F\in \mathfrak C : \exists\,\mu\in I(\mathcal F)
\text{ such that } (\mathcal F,\mu)\text{ is endogenous}\}.
\]
Then $\mathcal E$ contains a $\preceq$-minimal element; i.e.\ there exists
$\mathcal F^\ast\in\mathcal E$ such that there is no
$\mathcal G\in\mathcal E$ with $\mathcal G\prec \mathcal F^\ast$.
\end{lemma}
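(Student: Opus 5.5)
The claim is a finiteness statement about the poset $(\mathfrak F,\preceq)$, so the plan is to mirror the proof of Lemma~\ref{lem:maxsigma} with the order reversed. First, nonemptiness: by hypothesis at least one endogenous pair $(\mathcal F_0,\mu_0)$ exists. Taking it with $\mathcal F_0\in\mathfrak C$ and $\mu_0\in I(\mathcal F_0)$, which is exactly the situation produced in part~(i) via the pair $(\mathcal F^{\max},\mu^{\max})$, we get $\mathcal F_0\in\mathcal E$, so $\mathcal E\neq\varnothing$.

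Second, finiteness: since $\Omega$ is finite, $\sigma$-algebras on $\Omega$ correspond bijectively to partitions of $\Omega$. Hence $\mathfrak F$ is finite, with cardinality the Bell number $B_{|\Omega|}$, and so is its subset $\mathcal E$.

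Third, the descent argument. Start from $\mathcal F_0\in\mathcal E$. If $\mathcal F_0$ is not minimal in $\mathcal E$, pick $\mathcal F_1\in\mathcal E$ with $\mathcal F_1\prec\mathcal F_0$. Continue in this way, producing a strictly decreasing chain $\mathcal F_0\succ\mathcal F_1\succ\cdots$ in $\mathcal E$. Because $\preceq$ is a genuine partial order (inclusion of $\sigma$-algebras), the elements of a strictly decreasing chain are pairwise distinct. Finiteness of $\mathcal E$ therefore forces the chain to stop after at most $|\mathcal E|$ steps. The last element $\mathcal F^\ast$ has no $\mathcal G\in\mathcal E$ with $\mathcal G\prec\mathcal F^\ast$, which is the required minimality.

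For a cleaner bound, replace the chain by a strictly monotone integer invariant: take $\mathcal F^\ast\in\mathcal E$ minimising the number of atoms. If $\mathcal G\prec\mathcal F^\ast$, then $\mathcal G$ is a strict coarsening and so has strictly fewer atoms, hence $\mathcal G\notin\mathcal E$.

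The only real obstacle is interpretive rather than combinatorial: the definition of "endogenous pair" must make sense for $\mathcal F\in\mathfrak C$ with $\mu\in I(\mathcal F)$, so that the hypothesised endogenous pair actually places some $\sigma$-algebra in $\mathcal E$. I would address this by explicitly invoking the pair constructed in part~(i), which already satisfies both membership conditions. After that, closure of $\mathfrak C$ under coarsening is not even needed; finiteness alone suffices.
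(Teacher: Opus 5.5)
Your proof is correct and follows the paper's argument: $\mathcal E$ is a nonempty subset of the finite poset of $\sigma$-algebras on $\Omega$, and every nonempty finite poset has a minimal element, which your descent argument (or the cleaner choice of $\mathcal F^\ast\in\mathcal E$ with the fewest atoms) makes explicit. What you add is an explicit check that $\mathcal E\neq\varnothing$, which the paper leaves implicit; reading the hypothesis as giving a pair with $\mathcal F_0\in\mathfrak C$ and $\mu_0\in I(\mathcal F_0)$ already suffices, so your appeal to part~(i) is optional.
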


\begin{proof}
Because $\Omega$ is finite, the set of $\sigma$-algebras on $\Omega$ is finite,
hence $\mathcal E$ is finite. Any nonempty finite partially ordered set contains a minimal element.
\end{proof}

\begin{definition}\label{def:admissible}
Let $\mathcal F\in\mathfrak C$. A refinement $\mathcal F\preceq\mathcal H$
is called \emph{admissible} if $\mathcal H\in\mathfrak C$.
\end{definition}

\begin{lemma}[Restriction preserves admissible self-consistency]
\label{lem:restrict}
Let $(\mathcal F,\mu)$ be self-consistent with $\mathcal F\in\mathfrak C$,
and let $\mathcal G\prec\mathcal F$ with $\mathcal G\in\mathfrak C$.
Then $(\mathcal G,\mu|_{\mathcal G})$ is self-consistent with respect to
admissible refinements (Definition~\ref{def:admissible}).
In particular, $\mu|_{\mathcal G}\in I(\mathcal G)$.
\end{lemma}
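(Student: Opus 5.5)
We have to show two things: that $(\mathcal G,\mu|_{\mathcal G})$ satisfies the three clauses of Definition~\ref{def:invariant}, and that $\mu|_{\mathcal G}\in I(\mathcal G)$. The approach is to check the clauses one at a time, using the finite atom picture: $\mathcal G\prec\mathcal F$ means every atom of $\mathcal G$ is a disjoint union of atoms of $\mathcal F$. Hence $\mu|_{\mathcal G}$ is well defined on $\mathcal G$, and it is determined by summing $\mu$ over the $\mathcal F$-atoms inside each $\mathcal G$-atom.

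\emph{Compatibility.} This clause is immediate from Axiom~\ref{ax:A}. Since $\mathcal G\in\mathfrak C$ and any two elements of $\mathfrak C$ commute in the sense of Definition~\ref{CR}, the refinements generating $\mathcal G$ satisfy the commutativity condition. Note that $\mathcal G\in\mathfrak C$ is assumed outright, so coarsening-closure is not even needed here.

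\emph{Refinement consistency.} Read relative to admissible refinements (Definition~\ref{def:admissible}), this clause says: any $\mathcal H\succeq\mathcal G$ to which $\mu|_{\mathcal G}$ extends must respect commutativity. For admissible $\mathcal H$ we have $\mathcal H\in\mathfrak C$, so $\mathcal H$ commutes with everything in $\mathfrak C$, and the clause holds vacuously. For non-admissible $\mathcal H$ the clause is not required by the weakened notion. One can still remark that if $\mathcal H\succeq\mathcal F$, Axiom~\ref{ax:C} applied to $(\mathcal F,\mu)$ forbids an invariant extension anyway.

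\emph{Invariance.} This is the step I expect to be the main obstacle. Let $T:\Omega\to\Omega$ satisfy $T^{-1}A\in\mathcal G$ for all $A\in\mathcal G$. We need $\mu(T^{-1}A)=\mu(A)$ for $A\in\mathcal G$. The difficulty is that such a $T$ need not preserve $\mathcal F$, so the $\mathcal F$-invariance of $\mu$ does not apply directly.

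The first thing I would try is to reduce to permutations. On a finite set, preserving $\mathcal G$ for all $A$ forces $T$ to induce a map on atoms of $\mathcal G$. Treating $T$ as an automorphism (a bijection, as in the averaging construction of $I(\cdot)$ via $\mathrm{Aut}(\cdot)$), it permutes the $\mathcal G$-atoms. I would then try to lift $T$ to a $\mathcal G$-preserving $\tilde T$ that also preserves $\mathcal F$ and agrees with $T$ on $\mathcal G$-atoms. Then
\[
\mu(T^{-1}A)=\mu(\tilde T^{-1}A)=\mu(A),\qquad A\in\mathcal G .
\]
If the lift fails because corresponding $\mathcal G$-atoms have different $\mathcal F$-substructure, the fallback is to interpret $I(\mathcal G)$ as the measures invariant under $\mathrm{Aut}(\mathcal F)$-induced maps on $\mathcal G$, which is the reading under which the paper's averaging remark operates. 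Restriction then commutes with the action, giving $\mu|_{\mathcal G}\in I(\mathcal G)$ directly.

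With the three clauses established, the final assertion $\mu|_{\mathcal G}\in I(\mathcal G)$ is just the invariance clause restated.
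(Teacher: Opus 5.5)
Your compatibility and refinement-consistency checks are fine and essentially match the paper's. The invariance step, however, has a genuine gap. The lift $\tilde T$ does not exist in general, and no argument that derives $\mathcal G$-invariance from the $\mathcal F$-invariance of $\mu$ alone can succeed. Take $\Omega=\{1,2,3,4\}$, with $\mathcal F$ having atoms $\{1\},\{2\},\{3,4\}$ and $\mathcal G$ having atoms $\{1,2\},\{3,4\}$. Any bijection preserving $\mathcal F$ must map the unique two-point atom $\{3,4\}$ onto itself. So $\mu(\{1\})=\mu(\{2\})=1/10$, $\mu(\{3,4\})=4/5$ is $\mathcal F$-invariant. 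Yet $T=(1\,3)(2\,4)$ preserves $\mathcal G$, and $\mu(T^{-1}\{1,2\})=4/5\neq 1/5=\mu(\{1,2\})$. Your fallback of reinterpreting $I(\mathcal G)$ through maps induced by $\mathrm{Aut}(\mathcal F)$ changes the statement rather than proving it. The paper's own one-line justification here (``$T$ preserves the $\mathcal F$-atoms up to unions of $\mathcal F$-atoms'') does not supply the transfer either: in this example $T^{-1}\{1\}=\{3\}\notin\mathcal F$.

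What actually makes the lemma true is the hypothesis you explicitly set aside: closure of $\mathfrak C$ under coarsening together with pairwise commutativity (Axiom~\ref{ax:A}). Suppose $\mathcal F\in\mathfrak C$ had three distinct atoms $a,b,c$. Let $\mathcal G_1$ (resp.\ $\mathcal G_2$) be obtained from $\mathcal F$ by merging $a$ with $b$ (resp.\ $a$ with $c$), leaving all other atoms alone. Both are strict coarsenings, hence in $\mathfrak C$. Now take $x\in b$ and $z\in c$:
the pair $(x,z)$ lies in ${\sim_{\mathcal G_2}}\circ{\sim_{\mathcal G_1}}$, via any $y\in a$;
it does not lie in ${\sim_{\mathcal G_1}}\circ{\sim_{\mathcal G_2}}$, since the $\mathcal G_2$-class of $x$ is $b$ and the $\mathcal G_1$-class of $b$ is $a\cup b\not\ni z$.
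This contradicts commutativity. Hence every member of $\mathfrak C$ has at most two atoms, and any strict coarsening $\mathcal G\prec\mathcal F$ is $\{\emptyset,\Omega\}$. Then $\mu|_{\mathcal G}$ is invariant under every map $T$, bijective or not, because $T^{-1}\emptyset=\emptyset$ and $T^{-1}\Omega=\Omega$. With this observation your other two clauses go through as written. You also no longer need to restrict $T$ to bijections, a restriction that Definition~\ref{def:invariant} does not make.
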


\begin{proof}
\emph{Refinement commutativity}: Since $\mathcal G\in\mathfrak C$, commutativity holds
by definition of $\mathfrak C$.

\noindent \emph{Dynamical stability}: If $T$ preserves the $\mathcal G$-atoms, then it also
preserves the $\mathcal F$-atoms up to unions of $\mathcal F$-atoms, hence
$\mu(T^{-1}A)=\mu(A)$ for all $A\in\mathcal G$ follows from invariance of
$\mu$ on $\mathcal F$.

\noindent \emph{Admissible refinement stability}: Let $\mathcal H\succeq\mathcal G$ be an
admissible refinement, so $\mathcal H\in\mathfrak C$.
If $\mu|_{\mathcal G}$ extends to $\mathcal H$, then by Axiom~\ref{ax:C}
this is possible only when commutativity is preserved.
Conversely, if commutativity is preserved, admissibility means $\mathcal H$
lies in the same domain in which consistent extensions are allowed.
\end{proof}

\noindent\textbf{(ii) Minimality.}

By Lemma~\ref{lem:min-endog}, choose an endogenous pair
$(\mathcal F^\ast,\mu^\ast)$ such that $\mathcal F^\ast$ is minimal among
endogenous $\sigma$-algebras in $\mathfrak C$.
Suppose by contradiction that there exists a strict coarsening
$\mathcal G\prec\mathcal F^{\ast}$ such that $\mu^{\ast}$ is nondegenerate
on $\mathcal G$ (i.e.\ assigns strictly positive probability to every atom
of $\mathcal G$).
Because $\mathfrak C$ is closed under coarsening,
$\mathcal G\in\mathfrak C$.

By Lemma~\ref{lem:restrict}, the restricted pair
$(\mathcal G,\mu^{\ast}|_{\mathcal G})$ is self-consistent and
$\mu^{\ast}|_{\mathcal G}\in I(\mathcal G)$.
Applying the existence construction within $\mathcal G$
there therefore exists an endogenous pair supported on $\mathcal G$.
This contradicts the minimality of $\mathcal F^{\ast}$.
Hence no such $\mathcal G$ exists; equivalently every strict
coarsening of $\mathcal F^{\ast}$ makes $\mu^{\ast}$ degenerate.

\begin{remark} Intuitively $\mathcal F^{\ast}$ is the finest partition actually “seen’’ by $\mu^{\ast}$; coarsening loses support mass and thus is not faithful.
\end{remark} 

\begin{lemma}\label{lem:ergodic}
Let $\Omega$ be finite and let $G\leq \mathrm{Sym}(\Omega)$ act transitively
on the atoms of a $\sigma$-algebra $\mathcal F$.
Then there exists a unique $G$-invariant probability measure on $\mathcal F$,
namely the uniform measure on the atoms of $\mathcal F$.
\end{lemma}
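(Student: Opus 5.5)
The plan is to reduce the statement to a finite computation on the atoms of $\mathcal F$. Let $A_1,\dots,A_k$ be the atoms. Since $\Omega$ is finite, a probability measure $\mu$ on $\mathcal F$ is the same as a vector $(p_1,\dots,p_k)$ with $p_i=\mu(A_i)\ge 0$ and $\sum_i p_i=1$. The hypothesis that $G$ acts on the atoms is read as follows: each $g\in G$ sends every atom to an atom, $g\cdot A_i=A_{\sigma_g(i)}$, which gives a permutation action $g\mapsto\sigma_g$ of $G$ on $\{1,\dots,k\}$. We read $G$-invariance of $\mu$ as $\mu(g\cdot A)=\mu(A)$ for all $A\in\mathcal F$ and $g\in G$; pushforward invariance is equivalent, because $G$ is a group. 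By additivity, this condition holds exactly when $p_{\sigma_g(i)}=p_i$ for all $i$ and all $g$.

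\emph{Existence.} Take the uniform measure $p_i=1/k$, extended additively to unions of atoms. It is a probability measure on $\mathcal F$, and it is $G$-invariant because each $\sigma_g$ is a bijection of $\{1,\dots,k\}$. So $\mu(g\cdot A)$ equals $|A|_{\rm at}/k$, where $|A|_{\rm at}$ is the number of atoms contained in $A$, and this count does not change under $g$.

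\emph{Uniqueness.} Let $\mu$ be any $G$-invariant probability measure on $\mathcal F$. Fix indices $i,j$. Transitivity gives some $g\in G$ with $g\cdot A_i=A_j$, and invariance then gives $p_j=p_i$. So all the $p_i$ are equal, and the normalisation $\sum_i p_i=1$ forces $p_i=1/k$. Additivity over atoms then shows that $\mu$ agrees with the uniform measure on all of $\mathcal F$.

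The only step that needs care is the interpretation of the hypothesis. The map $g\mapsto\sigma_g$ is a permutation action only if $G$ actually maps atoms of $\mathcal F$ to atoms of $\mathcal F$, so that $\mathcal F$ is $G$-stable. I would state this explicitly as part of what ``acts on the atoms'' means. Without it, $g\cdot A_i$ need not lie in $\mathcal F$, and the invariance condition would not even be defined. With this convention fixed, both existence and uniqueness are immediate.
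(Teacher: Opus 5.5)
Your proof is correct and follows the paper's route: transitivity plus invariance forces all atoms to carry equal weight, and normalisation then gives $1/k$. You also verify that the uniform measure is actually $G$-invariant (existence), and you state explicitly that $G$ must map atoms of $\mathcal F$ to atoms; the paper's proof leaves both points implicit.
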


\begin{proof}
Let $A_1,\dots,A_k$ be the atoms of $\mathcal F$.
If $\mu$ is $G$-invariant, then for any $g\in G$
\[
\mu(gA_i)=\mu(A_i).
\]
Since the action is transitive on the atoms, every atom can be mapped to
every other atom by some element of $G$, so all atoms must have equal
probability.
Thus $\mu(A_i)=1/k$ for all $i$, which uniquely determines $\mu$.
\end{proof}

\medskip
\noindent\textbf{(iii) Uniqueness up to symmetry.}
Assume the symmetry hypothesis stated in the theorem.
\medskip

$\bullet$	There is a (finite) group $G$ of bijections of $\Omega$ preserving the compatibility domain $\mathfrak{C}$ (i.e.\ for each $g\in G$ and $\mathcal F\in\mathfrak{C}$,
the pushforward $g(\mathcal F)\in\mathfrak{C}$).

\medskip

$\bullet$	The action of $G$ on the set of minimal endogenous $\si$-algebras is transitive.

\medskip

Let $(\mathcal F_1,\mu_1)$ and $(\mathcal F_2,\mu_2)$ be two endogenous pairs produced by Theorem \ref{th:endogenous}. By minimality and the transitivity assumption there exists $g\in G$ such that $g(\mathcal F_1)=\mathcal F_2$. Consider the pushforward measure $g_\sharp \mu_1$ on $\mathcal F_2$ given by $g_\sharp \mu_1(A)=\mu_1(g^{-1}A)$ for $A\in\mathcal F_2$. Because $\mu_1$ satisfied the invariance properties relative to $\mathcal F_1$ and $g$ preserves compatibility domain and the invariance structure, $g_\sharp \mu_1$ lies in $I(\mathcal F_2)$. But $\mu_2$ is an element of $I(\mathcal F_2)$ as well.  By Lemma~\ref{lem:ergodic}, the $G$-invariant probability measure on
$\mathcal F_2$ is unique.
Since both $\mu_2$ and $g_\sharp\mu_1$ are invariant under the induced
symmetry action, they must coincide.
This completes the proof of Theorem \ref{th:endogenous}.

\begin{remark} The final identification step commonly requires an ergodicity/irreducibility hypothesis on the action of the automorphism group on atoms; without such an assumption one only has equality up to an element of the convex set of invariant measures. The statement above therefore includes the natural extra assumption under which uniqueness (up to symmetry) holds. This mirrors physical situations where symmetry together with ergodicity single out a unique invariant measure (e.g. microcanonical ensembles under full permutation symmetry).
\end{remark} 

\begin{remark}[generalization to infinite $\Omega$ and measure-theoretic issues]
The finite-case arguments given above avoid the technical
measure-theoretic difficulties that arise in infinite settings. In the countably infinite or continuous case, existence of invariant measures and extension problems call for compactness arguments in appropriate weak-* topologies, use of Prokhorov’s theorem, and careful handling of $\si$-algebra generation. Zorn’s lemma and Krein–Milman style convex analysis (extreme points of invariant measure sets) become necessary. These generalizations are standard in ergodic theory and probability on Polish spaces and are omitted here for brevity; the finite-case results capture the essential combinatorial and algebraic structure
of the endogenous fixed-point construction. In the infinite case, the existence of invariant measures under compactness conditions follows by standard results in ergodic theory and dynamical systems \cite{BK, P}.
\end{remark}

\section{Refinement Dynamics and Dependency Structure}\label{sec4}
We now study the combinatorial structure generated by successive
refinements of $\sigma$-algebras on the finite set $\Omega$.
A refinement sequence
\[
\mathcal F_0 \subseteq \mathcal F_1 \subseteq \mathcal F_2 \subseteq \cdots
\]
forms a directed system in the poset of $\sigma$-algebras ordered by
refinement.
The elementary steps of this system are atomic refinements,
which subdivide individual atoms of the current partition.
These elementary refinements generate a dependency structure
that will be represented by a directed graph.

\subsection{Refinements as elementary operations} 
A \emph{refinement step} is a map
$$
\mathcal{F} \longrightarrow \mathcal{G}
$$
such that $ \mathcal{F} \subsetneq \mathcal{G} $ and the atoms of $ \mathcal{G} $ form a strict refinement of the atoms of $ \mathcal{F} $.
\begin{definition} (Atomic refinement).
A refinement $ \mathcal{F} \to \mathcal{G} $ is \emph{atomic} if exactly one atom of $\mathcal{F}$ is subdivided, and all others remain unchanged.
\end{definition} 
Atomic refinements therefore generate the directed system of
$\sigma$-algebras under consideration. Any finite refinement can
be decomposed into a finite sequence of such elementary steps.
\begin{proposition} 
Every finite refinement can be expressed (non-uniquely) as a finite sequence of atomic refinements.
\end{proposition}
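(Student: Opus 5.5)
The plan is to argue by induction on the number of atoms that get subdivided. Let $\mathcal F\subsetneq\mathcal G$ be a finite refinement. Write $A_1,\dots,A_k$ for the atoms of $\mathcal F$. Since $\Omega$ is finite, $\mathcal G$ is determined by its atom partition, and each atom of $\mathcal G$ lies inside a unique atom of $\mathcal F$. So for each $i$ the atoms of $\mathcal G$ contained in $A_i$ form a partition $\pi_i$ of $A_i$. Let $S=\{i : \pi_i \text{ is not the trivial partition } \{A_i\}\}$. This set is nonempty because the refinement is strict, and it has at most $k$ elements.

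Next, enumerate $S=\{i_1,\dots,i_m\}$ in any order and define intermediate $\sigma$-algebras. Let $\mathcal F_j$ be the $\sigma$-algebra whose atoms are the blocks of $\pi_{i_1},\dots,\pi_{i_j}$ together with the unchanged atoms $A_i$ for $i\notin\{i_1,\dots,i_j\}$. These sets do form a partition of $\Omega$, so $\mathcal F_j$ is a $\sigma$-algebra. We have $\mathcal F_0=\mathcal F$ and $\mathcal F_m=\mathcal G$.

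The key check is that each step $\mathcal F_{j-1}\to\mathcal F_j$ is atomic. The two partitions differ only in that the single atom $A_{i_j}$ of $\mathcal F_{j-1}$ is replaced by the blocks of $\pi_{i_j}$. Because $i_j\in S$, this subdivision is strict, and every other atom is left untouched. Hence $\mathcal F_{j-1}\subsetneq\mathcal F_j$, and the step satisfies the definition of an atomic refinement. Concatenating the $m$ steps gives the required finite sequence.

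Non-uniqueness comes in two ways. The ordering of $S$ is arbitrary. In addition, whenever some $\pi_i$ has at least three blocks, one can split $A_i$ in stages, first peeling off one block and then subdividing the remainder, and each stage is still atomic. So when $|S|\ge 2$ or some $\pi_i$ has three or more blocks, there are several decompositions. When $|S|=1$ and the single $\pi_i$ has exactly two blocks, the decomposition is unique, so "non-uniquely" should be read as "not necessarily uniquely". No step is a real obstacle; the only point needing care is confirming that the intermediate families are genuine partitions, which is what makes each $\mathcal F_j$ a $\sigma$-algebra.
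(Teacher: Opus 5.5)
Your proof is correct and takes the same route as the paper, which splits the atoms of $P(\mathcal F)$ one at a time until $P(\mathcal G)$ is reached; you add the details the paper leaves implicit, namely the intermediate partitions $\mathcal F_j$ and the check that each step is strict and atomic. Your remark that the decomposition is forced when $\mathcal G$ has exactly one more atom than $\mathcal F$ is also correct, so the paper's ``non-uniquely'' should indeed be read as ``not necessarily uniquely''.
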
 
\begin{proof}
Every refinement of finite partitions can be obtained by
successively subdividing atoms.
Equivalently, if $P(\mathcal F)$ and $P(\mathcal G)$ are the atom
partitions with $P(\mathcal G)$ refining $P(\mathcal F)$,
one may split the atoms of $P(\mathcal F)$ one at a time until
$P(\mathcal G)$ is obtained.
\end{proof}
Each atomic refinement corresponds to the creation of a new distinction. Time will emerge from the partial order generated by these refinement relations.

\subsection{Refinement graph and induced partial order} 
Let $\mathfrak{R}$ denote the class of all atomic refinements
between successive $\sigma$-algebras in a refinement chain,
$$
R : \mathcal{F}_i \longrightarrow \mathcal{F}_{i+1}.
$$

\begin{definition}[Event graph]
Let $\mathfrak R$ be the set of atomic refinements.
Define a relation $R_1\prec R_2$ if in every refinement chain
containing both steps, $R_1$ occurs before $R_2$.
The event graph $\mathcal E$ is the directed graph whose nodes are
elements of $\mathfrak R$ with edges given by this relation.
\end{definition}

The relation $\prec$ captures the dependency structure of
refinement operations: $R_1\prec R_2$ precisely when the
distinction introduced by $R_2$ can only be defined after
that introduced by $R_1$.

\begin{figure}[htbp]
\centering
\begin{tikzpicture}[line width=0.3mm,black,scale=0.5]
\draw (-2, 1) -- (-1,3);
\draw (2,1) -- (1,3);
\draw (-2,-1) -- (-1, -3);
\draw (2,-1) -- (1, -3); 
\node at (-2.5,0) {$R_2$}; 
\node at (2.5, 0) {$R_3$};  
\node at (0, 4) {$R_1$};
\node at (0, -4) {$R_4$}; 
\end{tikzpicture}
\caption{An abstract event graph illustrating spacelike and timelike relations between refinement events.}
\label{fig:eventgraph}
\end{figure}
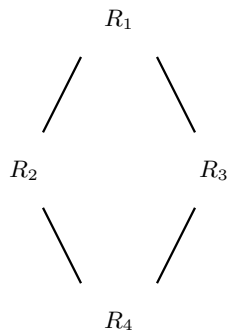

\begin{theorem} 
The relation $ \prec $ is a partial order. Thus $ \mathcal{E} $ is a directed acyclic graph (DAG).
\end{theorem}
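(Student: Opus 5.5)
The plan is to reduce $\prec$ to the inclusion order on $\sigma$-algebras, which is already a partial order. First I would fix the reading of the definition so that it is not vacuous. If no refinement chain contains both $R_1$ and $R_2$, then the clause ``in every chain containing both steps'' holds trivially, and we would get both $R_1\prec R_2$ and $R_2\prec R_1$. So I will read $R_1\prec R_2$ as meaning: (a) some chain contains both steps, and (b) in every such chain $R_1$ occurs strictly before $R_2$. I would state this convention explicitly at the start, since without it antisymmetry fails. With it, I will prove that $\prec$ is a strict partial order, i.e.\ irreflexive and transitive (asymmetry then follows).

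The key observation is that the position of a step in a chain is determined intrinsically by the $\sigma$-algebras involved. Write an atomic refinement as $R=(\mathcal F_R\to\mathcal G_R)$ with $\mathcal F_R\subsetneq\mathcal G_R$. A refinement chain $\mathcal F_0\subsetneq\mathcal F_1\subsetneq\cdots$ is strictly increasing, so each $\sigma$-algebra appears at most once. Hence a step $R$ can appear at most once in a chain, namely at the unique index $i$ with $\mathcal F_i=\mathcal F_R$. Suppose both $R_1$ and $R_2$ lie in one chain and are distinct. Then $R_1$ occurs before $R_2$ if and only if $\mathcal G_{R_1}\subseteq\mathcal F_{R_2}$. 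This holds because within a chain the index order coincides with inclusion, and distinct steps occupy distinct consecutive pairs. The condition $\mathcal G_{R_1}\subseteq\mathcal F_{R_2}$ does not depend on the chain chosen. I would then prove the lemma
\[
R_1\prec R_2 \iff \mathcal G_{R_1}\subseteq\mathcal F_{R_2}.
\]
For the direction ($\Leftarrow$), I need to exhibit a chain containing both steps. I would concatenate a chain of atomic refinements from $\mathcal F_0$ up to $\mathcal F_{R_1}$, then the step $R_1$, then a chain from $\mathcal G_{R_1}$ up to $\mathcal F_{R_2}$, then $R_2$. The intermediate pieces exist by the Proposition that every finite refinement decomposes into atomic refinements.

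With the lemma in hand, the order properties follow from inclusion. Irreflexivity: $R\prec R$ would require $\mathcal G_R\subseteq\mathcal F_R$, contradicting $\mathcal F_R\subsetneq\mathcal G_R$. Transitivity: from $\mathcal G_{R_1}\subseteq\mathcal F_{R_2}\subsetneq\mathcal G_{R_2}\subseteq\mathcal F_{R_3}$ we get $\mathcal G_{R_1}\subseteq\mathcal F_{R_3}$, hence $R_1\prec R_3$. Asymmetry: $R_1\prec R_2$ and $R_2\prec R_1$ together would give $\mathcal G_{R_1}\subseteq\mathcal F_{R_2}\subsetneq\mathcal G_{R_2}\subseteq\mathcal F_{R_1}$, contradicting $\mathcal F_{R_1}\subsetneq\mathcal G_{R_1}$. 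Finally, a directed graph whose edge relation is a strict partial order has no directed cycles, because a cycle would give $R\prec R$ by transitivity. So $\mathcal E$ is a DAG.

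The main obstacle I expect is the lemma's direction ($\Leftarrow$), together with the non-vacuity convention. Transitivity cannot be proved naively: a chain containing $R_1$ and $R_3$ need not pass through $R_2$, so one cannot argue by looking at a single chain. The intrinsic characterisation via inclusion is exactly what avoids this. I would check carefully that the chains in $\mathfrak R$ are allowed to start at $\mathcal F_0$ and to pass through arbitrary intermediate atomic steps. If chains are restricted (for example, to $\mathfrak C$), the existence part of the lemma must be re-verified within that restricted class.
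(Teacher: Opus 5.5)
Your proof is correct, and it is a different and more careful route than the paper's. The paper's entire argument is one sentence: reflexivity and transitivity ``follow directly from the definition of refinement chains'', and antisymmetry holds because two atomic refinements cannot precede each other in all chains.

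That sketch runs into exactly the problem you isolate at the start. Under the literal definition, two distinct atomic refinements that share no chain precede each other vacuously, so antisymmetry fails. An example is $\mathcal F_0\to\mathcal F_A$ and $\mathcal F_0\to\mathcal F_B$ from Section~\ref{sec6}. Transitivity fails in the same way: take $R_2$ sharing no chain with $R_1$ or $R_3$, while $R_3$ precedes $R_1$ in some chain. Finally, if ``occurs before'' is read strictly, the claimed reflexivity is false. If it is read non-strictly, the edge relation is reflexive, which puts self-loops in $\mathcal E$ and contradicts the DAG conclusion.

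So your non-vacuity convention and your choice of a strict partial order are what make the statement true at all. The key new ingredient is your chain-independence lemma $R_1\prec R_2\iff\mathcal G_{R_1}\subseteq\mathcal F_{R_2}$. It is exactly what lets you compare $R_1$ with $R_3$ without needing a chain through $R_2$. It also shows that, when steps are pairs of $\sigma$-algebras as in the paper's definition of $\mathfrak R$, the event order is fixed entirely by inclusion between the endpoints of the steps. The paper's approach buys only brevity.

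Your closing caveat resolves itself in the paper's setting. Suppose chains must stay in $\mathfrak C$ and $\mathcal G_{R_2}\in\mathfrak C$. Then every $\sigma$-algebra in your concatenated chain is a coarsening of $\mathcal G_{R_2}$, so it lies in $\mathfrak C$ because $\mathfrak C$ is closed under coarsening.
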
 
\begin{proof}
Reflexivity and transitivity follow directly from the definition
of refinement chains, while antisymmetry holds because two atomic
refinements cannot precede each other in all refinement chains.
\end{proof}

\noindent The event DAG is the emergent analogue of a causal set: a discrete
structure in which causal relations are represented by a partial
order on events \cite{BLS}. No background time is presupposed; temporal
order arises purely from the refinement structure: event $ R_1 $ precedes $ R_2 $ exactly when the distinctions introduced by $ R_2 $ are only definable after those introduced by $ R_1 $. In Figure \ref{fig:eventgraph} the vertical placement is schematic only. The diagram illustrates a partial order on refinement events: $R_1$ precedes both $R_2$ and $R_3$; $R_2$ and $R_3$ are spacelike-separated (incompatible); and $R_4$ succeeds both. No metric or geometric structure is implied. 

\subsection{Commutativity and independence}
Compatibility of refinements is determined by commutativity
of the associated partitions.
Compatible refinements can be performed in either order and
lead to the same joint refinement.

We now formalise the central idea: spacelike relations arise from commutative refinements, and timelike relations from non-commutative refinements.

Let $R_A : \mathcal F \to \mathcal F_A$ and $R_B : \mathcal F \to \mathcal F_B$ be two atomic refinements of the same $\sigma$-algebra.

\begin{definition}[Commutativity of refinements]
Refinements $R_A$ and $R_B$ are \emph{commutative} (or \emph{compatible}) if their associated equivalence relations $\!\sim_A$ and $\!\sim_B$ on $\Omega$ commute:
\[
\sim_A \circ \sim_B \;=\; \sim_B \circ \sim_A
\]
as in Definition~\ref{CR}.
Equivalently, there exists a joint refinement $\mathcal F_{AB} = \sigma(\mathcal F_A \cup \mathcal F_B)$ whose atoms are the non-empty intersections of atoms of $\mathcal F_A$ and $\mathcal F_B$,
\[
\text{Atoms}(\mathcal F_{AB}) = \{\, A_i \cap B_j \neq \varnothing \,\},
\]
and for which a consistent measure $\mu_{AB}$ exists.
If these relations fail -- i.e.\ if $\sim_A$ and $\sim_B$ do not commute or no consistent joint measure exists -- the refinements are \emph{non-commutative}.
\end{definition}

\begin{definition}[Spacelike vs.\ timelike relation]
Two proto-events $R_A, R_B$ are:
\begin{itemize}[leftmargin=2em]
\item \emph{spacelike-related} if they are commutative (Definition \ref{CR}),
\item \emph{timelike-related} if they do not commute and therefore cannot
be realised simultaneously within a single refinement.
\end{itemize}
\end{definition}
This aligns with the conceptual structure of quantum theory:
\begin{itemize}
\item	In quantum mechanics: commutativity $\leftrightarrow$ commuting observables.
\item	Here: commutativity $\leftrightarrow$ refinements whose distinctions can be made simultaneously.
\end{itemize}
\begin{proposition}
Compatible refinements commute (in the sense of $\sigma$-algebraic generation):
$$
\mathcal{F}_{AB} = \mathcal{F}_{BA}.
$$
\end{proposition}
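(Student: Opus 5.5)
The plan is to read $\mathcal F_{AB}$ and $\mathcal F_{BA}$ as the $\sigma$-algebras generated by $\mathcal F_A\cup\mathcal F_B$ and by $\mathcal F_B\cup\mathcal F_A$, respectively, or, operationally, as the $\sigma$-algebras reached by performing $R_A$ and then $R_B$ versus $R_B$ and then $R_A$. In both readings the claim reduces to showing that the two constructions give the same partition of $\Omega$. Since $\Omega$ is finite, a $\sigma$-algebra is determined by its atoms, so it suffices to compare atom partitions.

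First, I will verify that $\sigma(\mathcal F_A\cup\mathcal F_B)$ has as atoms exactly the nonempty sets $A_i\cap B_j$, where $A_i$ and $B_j$ range over the atoms of $\mathcal F_A$ and $\mathcal F_B$. These sets are nonempty and pairwise disjoint, and they cover $\Omega$, so they form a partition. Each of them lies in the generated $\sigma$-algebra. Every generator, that is, every atom $A_i$ or $B_j$, is a union of these sets, so the $\sigma$-algebra of unions of the $A_i\cap B_j$ contains all generators. It is also contained in the generated $\sigma$-algebra, hence equals it. The description is symmetric, because $A_i\cap B_j=B_j\cap A_i$. Therefore $\mathcal F_{AB}=\mathcal F_{BA}$ as $\sigma$-algebras, using only commutativity of set union and intersection.

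Next, for the sequential reading, I will observe that applying $R_B$ after $R_A$ means subdividing the atoms of $\mathcal F_A$ by the distinctions of $\mathcal F_B$. The result is the join $\mathcal F_A\vee\mathcal F_B$, whose atoms are again the nonempty $A_i\cap B_j$, and the reverse order gives the same family. This is where compatibility enters. By the definition of commutative refinements, $\sim_A\circ\sim_B=\sim_B\circ\sim_A$, so the composite is an equivalence relation, namely the join relation $\sim_A\vee\sim_B$. Hence the common coarsening $\mathcal F_A\wedge\mathcal F_B$ is the same from either side. Under the same hypothesis, the consistent joint measure $\mu_{AB}$ on the atoms $A_i\cap B_j$ serves for both orders.

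The main obstacle is interpretive rather than technical. Taken literally as generated $\sigma$-algebras, the equality holds for any pair of refinements, commuting or not. To make the compatibility hypothesis do real work, I will state explicitly that it is what guarantees that the sequential reading is well defined: each intermediate step stays in $\mathfrak C$, by closure under coarsening (Axiom~\ref{ax:A}), and the measure extension is consistent in both orders. The partition identity itself is then the symmetric-intersection argument of the previous paragraphs.
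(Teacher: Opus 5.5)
Your second paragraph is correct and already proves the statement. The paper states this proposition without proof, because it is immediate from the definition $\mathcal F_{AB}=\sigma(\mathcal F_A\cup\mathcal F_B)$ given just before it: since $\mathcal F_A\cup\mathcal F_B=\mathcal F_B\cup\mathcal F_A$, the two generated $\sigma$-algebras coincide, and your atom computation is a correct but optional elaboration. You are also right that the identity holds whether or not $\sim_A$ and $\sim_B$ commute, and you should leave it there. The role you then assign to the compatibility hypothesis in your last two paragraphs is not just unnecessary but partly false, and should be deleted.

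First, the claim that the common coarsening $\mathcal F_A\cap\mathcal F_B$ is ``the same from either side'' holds for every pair. It also concerns the meet, not the join $\mathcal F_{AB}$. What commutation actually gives is that $\sim_A\circ\sim_B$ already equals the join relation $\sim_A\vee\sim_B$, whose classes are the atoms of $\mathcal F_A\cap\mathcal F_B$; this says nothing about $\mathcal F_{AB}$.

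Second, and more seriously, commutation does not keep the refinement steps inside $\mathfrak C$. In Example~\ref{ex:NC} and the toy model of Section~\ref{sec6}, $\mathcal A$ and $\mathcal B$ commute, yet their join is the discrete $\sigma$-algebra on $\{1,2,3,4\}$. That join lies in no family satisfying Axiom~\ref{ax:A}: closure under coarsening would force the non-commuting pair $\mathcal A,\mathcal B'$ into the family, contradicting pairwise commutativity. Closure under coarsening only propagates membership downward from a $\sigma$-algebra already known to lie in $\mathfrak C$, and commutation of $\sim_A,\sim_B$ supplies no such $\sigma$-algebra.

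Finally, the remark that $\mu_{AB}$ serves both orders is harmless, but it is irrelevant to an identity between $\sigma$-algebras.
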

\begin{proposition}
If $R_A$ and $R_B$ are non-commutative, then any refinement chain containing both must place one strictly before the other.  Thus non-commutativity induces a temporal relation.
\end{proposition}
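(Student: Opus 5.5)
The approach is to argue by contraposition: if some refinement chain contains both $R_A$ and $R_B$ but does not place one strictly before the other, then $R_A$ and $R_B$ commute. Recall that $R_A:\mathcal F\to\mathcal F_A$ and $R_B:\mathcal F\to\mathcal F_B$ are atomic refinements of the same $\sigma$-algebra $\mathcal F$. A refinement chain $\mathcal F_0\subseteq\mathcal F_1\subseteq\cdots$ is totally ordered by inclusion, so the steps it contains are linearly indexed. The only way the chain could avoid placing one step strictly before the other is for both distinctions to be introduced at the same step. That means a single refinement $\mathcal F_i\to\mathcal F_{i+1}$ realises both, and $\mathcal F_{i+1}\succeq \mathcal F_A,\mathcal F_B$.

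First I would reduce to this situation. Given two distinct steps in a chain, their indices differ, so one strictly precedes the other and there is nothing to prove. The remaining case is therefore a single step of the chain that carries out both refinements simultaneously. I would then read ``realised simultaneously within a single refinement'' exactly as in the definition of the timelike relation: there is one $\sigma$-algebra in which both sets of distinctions coexist and on which a consistent measure exists.

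The second step is to show that such simultaneous realisation forces commutativity. The common refinement contains $\sigma(\mathcal F_A\cup\mathcal F_B)$. I would show that, when both splittings are performed on one state, the atoms obtained are exactly the nonempty intersections $A_i\cap B_j$ together with a consistent joint measure $\mu_{AB}$. By the equivalent formulation in the definition of commutativity of refinements, this is precisely the statement that $R_A$ and $R_B$ are commutative. This contradicts the hypothesis, so every chain containing both must separate them, placing one strictly before the other.

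The main obstacle is this second step. Purely set-theoretically, any two partitions have a join, so the existence of a common refinement alone does not yield $\sim_A\circ\sim_B=\sim_B\circ\sim_A$; Example~\ref{ex:NC} shows non-commuting partitions that still have a join. The argument must therefore use the paper's convention that admissible refinements live in the commutativity domain $\mathfrak C$ of Axiom~\ref{ax:A}. A single admissible step producing $\mathcal F_{i+1}$ lies in $\mathfrak C$, hence so do its coarsenings $\mathcal F_A$ and $\mathcal F_B$, and any two elements of $\mathfrak C$ commute. I would also invoke Axiom~\ref{ax:C}: a non-admissible simultaneous refinement admits no invariance-preserving measure extension, so it cannot be realised. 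Making this use of the axioms explicit is the crux of the argument.
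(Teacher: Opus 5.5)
Your reduction to the case where both distinctions first appear at the same step matches the architecture of the paper's sketch: rule out simultaneous realisation, then conclude succession. Your diagnosis of the obstacle is also right, and it already defeats the plan of your second paragraph. For \emph{any} two partitions, the atoms of $\sigma(\mathcal F_A\cup\mathcal F_B)$ are the nonempty intersections $A_i\cap B_j$, and consistent measures on it exist. For the non-commuting pair $P_A,P_{B'}$ of Section~\ref{sec6} these atoms are $\{1\},\{2\},\{3,4\}$. So that ``equivalent formulation'' holds for every pair and cannot give $\sim_A\circ\sim_B=\sim_B\circ\sim_A$. The paper's own sketch falls into the same hole. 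It asserts that no $\sigma$-algebra contains both $\mathcal F_A$ and $\mathcal F_B$, under the hypothesis $\mathcal F_{AB}\neq\mathcal F_{BA}$, and that hypothesis never holds when $\mathcal F_{AB}=\sigma(\mathcal F_A\cup\mathcal F_B)$.

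The genuine gap is in your repair via $\mathfrak C$. Admissibility (Definition~\ref{def:admissible}), Axiom~\ref{ax:A} and Axiom~\ref{ax:C} constrain only the $\sigma$-algebra that is reached, never the order in which distinctions were introduced. The one-step refinement $\mathcal F\to\mathcal H$ and the two-step path $\mathcal F\to\mathcal F_A\to\mathcal H$ are therefore admissible or inadmissible together. Concretely, suppose a chain lying in $\mathfrak C$ realises $\mathcal F_A$ by step $i$ and $\mathcal F_B$ by step $j\ge i$. Then both are coarsenings of $\mathcal F_j\in\mathfrak C$, so they lie in $\mathfrak C$ and commute. Your argument never used $i=j$.

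Under your convention, then, no chain contains a non-commuting pair at all. The proposition holds only vacuously, and the temporal relation it is meant to produce never arises. This is also not the paper's convention: Section~\ref{sec6} explicitly allows the sequence $A\to B'$, which ends at $\sigma(\mathcal F_A\cup\mathcal F_{B'})\notin\mathfrak C$ (its coarsenings $\mathcal F_A,\mathcal F_{B'}$ do not commute).

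If you drop the convention, your crux is false. On $\Omega=\{1,2,3,4\}$, take the single atomic step from $\mathcal F_0$ to the $\sigma$-algebra with atoms $\{1\},\{2\},\{3,4\}$. It realises both $P_A$ and $P_{B'}$ at once, so neither is placed strictly before the other.

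The missing ingredient is a rule about individual steps rather than about admissible $\sigma$-algebras: a single refinement step may introduce only mutually commuting distinctions. This is how the paper's definition of ``timelike'' reads. With that as an explicit hypothesis, your first reduction finishes the proof immediately. Without some such hypothesis no argument can succeed, because, read as you read it, the statement fails in the full lattice $\mathfrak F$.
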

\begin{proof}[Sketch]
Let $\mathcal F$ be a $\sigma$-algebra and $R_A,R_B$ two refinements such that
$\mathcal F_{AB}\neq \mathcal F_{BA}$.
Then there is no $\sigma$-algebra $\mathcal F_*$
with $\mathcal F_A,\mathcal F_B\subseteq \mathcal F_*\subseteq \mathcal F_{AB}\cap\mathcal F_{BA}$
containing both refinements simultaneously.
Any chain of refinements including both $R_A$ and $R_B$ must therefore realise
either $R_A$ followed by $R_B$ or $R_B$ followed by $R_A$ as successive steps.
\end{proof}

\subsection{Measure-preserving refinements} 
Let $R: \mathcal{F} \to \mathcal{G}$ be a refinement subdividing a single atom 
$A \in \mathrm{Atoms}(\mathcal{F})$ into atoms $A_1,\dots, A_k$.
The endogenous measure construction (Section 3) gives:

\begin{definition}[Induced conditional measures]
For any $B\in\mathcal{F}$,
\[
\mu_{\mathcal{G}}(B \cap A_i)
= \mu_{\mathcal{F}}(B \cap A)\, p_i,
\]
where $p_i\ge0$ and $\sum_i p_i=1$ are weights determining the
extension of $\mu_{\mathcal F}$ to $\mathcal G$.
\end{definition}

The measure therefore branches only when refinements occur, and depends on 
the informational structure of the refinement itself -- no external probabilistic postulates are needed.

\subsection{Non-unique measure extension} 
Given a refinement $\mathcal F\subseteq\mathcal G$,
a probability measure on $\mathcal F$ may admit multiple
extensions to $\mathcal G$.
The endogenous measure construction selects extensions that
remain compatible with the commutativity constraints of
Section~\ref{sec3}.

Suppose at some stage $ \mathcal{F} $ there exist two or more non-commutative refinements $ R_1, R_2, \dots $, then the endogenous measure must satisfy the following consistency constraint across incompatible futures:

\medskip

The extension of $ \mu $ into each non-commutative branch must satisfy:
\begin{enumerate}[label=(\roman*), leftmargin=2em]
\item normalisation on each branch,
\item consistency on the shared $\sigma$-algebra $\mathcal{F}$,
\item minimality of the extension (Theorem~\ref{th:endogenous}),
\item no-cross interference: incompatible refinements do not allow a joint $\sigma$-algebra.
\end{enumerate}
\begin{theorem}[Endogenous branching rule]
The measure assigns to each incompatible refinement $ R_i $ a branching weight $ w_i $ determined entirely by the internal consistency conditions.
These weights satisfy:
$$
w_i \ge 0, \qquad \sum_i w_i = 1.
$$
\end{theorem}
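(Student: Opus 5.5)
We need a plan for the "Endogenous branching rule" theorem: the measure assigns to each incompatible refinement $R_i$ a weight $w_i\ge 0$ with $\sum_i w_i=1$, and these weights are determined by internal consistency. The statement is vague, so the plan must fix a precise reading first.

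The plan is to make the statement precise through a single construction and then check the two properties. Fix the stage $\mathcal F$ with its endogenous measure $\mu$, and let $R_1,\dots,R_m$ be the pairwise non-commutative refinements $R_i:\mathcal F\to\mathcal F_i$. By condition (iv) there is no joint $\sigma$-algebra containing two distinct $\mathcal F_i$. So the branches cannot be weighted through a common refinement of $\Omega$. They can be weighted on the disjoint union of the branches, i.e.\ on the finite index set $\{1,\dots,m\}$ regarded as the sample space of "which refinement is realised". I will define $w_i$ as a probability measure on this index set, built only from data intrinsic to $(\mathcal F,\mu)$ and the $\mathcal F_i$. The canonical choice is the symmetric one, justified as follows.

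\textbf{Step 1 (consistency).} Apply Theorem~\ref{th:endogenous}(i)--(ii) in each branch to obtain a minimal endogenous extension $\mu_i\in I(\mathcal F_i)$ with $\mu_i|_{\mathcal F}=\mu$. Here Lemma~\ref{lem:restrict} guarantees that restricting back to $\mathcal F$ is compatible, and the induced conditional measures of the previous subsection give the explicit form of $\mu_i$ on the subdivided atom. Each $\mu_i$ is then a probability measure, which gives normalisation on each branch (condition (i)) and agreement on $\mathcal F$ (condition (ii)).

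\textbf{Step 2 (convex combination).} Form the branch-level measure $\nu=\sum_i w_i\,\delta_i\otimes\mu_i$ on $\bigsqcup_i(\Omega,\mathcal F_i)$. Requiring that $\nu$ be a probability measure whose marginal on $\mathcal F$ is $\mu$ forces $\sum_i w_i\,\mu=\mu$. Since $\mu(\Omega)=1$, this gives $\sum_i w_i=1$. Nonnegativity $w_i\ge0$ is required for $\nu$ to be a measure at all.

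\textbf{Step 3 (determination).} To show the weights are fixed by the internal conditions rather than chosen externally, I would take $G$ to be the group of $\mu$-preserving automorphisms of $\mathcal F$ (Axiom~\ref{ax:B}), acting on the set $\{R_i\}$ by pushforward. Invariance of $\nu$ under $G$ forces $w$ to be constant on $G$-orbits. Where $G$ is transitive on the branches, Lemma~\ref{lem:ergodic} applied to the index set gives the unique value $w_i=1/m$.

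The main obstacle is Step 3. Without a transitivity or ergodicity hypothesis, invariance only pins $w$ down to the simplex of orbit-constant vectors, not to a single point. In that case I would weaken "determined entirely" to "determined up to the convex set of $G$-invariant weightings", mirroring the remark after Theorem~\ref{th:endogenous}. As a refinement, I would try fixing weights within an orbit by the $\mu$-mass of the atom that each $R_i$ subdivides, normalised. This is intrinsic and nonnegative, but I expect it to need care when several branches subdivide the same atom.
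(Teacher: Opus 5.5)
The paper states this theorem without proof, so there is nothing to match. Judged on its own, your plan does not establish the one substantive claim: that the $w_i$ are ``determined entirely by the internal consistency conditions.''

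\textbf{The weights are not determined.} None of (i)--(iv) involves the weights; they constrain only the branch measures $\mu_i$.
\begin{itemize}
\item Step~2 therefore does not derive $\sum_i w_i=1$ or $w_i\ge 0$. It imposes them by requiring $\nu$ to be a probability measure with marginal $\mu$.
\item Step~3 adds a postulate outside (i)--(iv), namely $G$-invariance of $\nu$. It also needs $G$ to preserve the branch set and act transitively on it, and nothing guarantees either.
\end{itemize}
The simplest case already fails. Take $\mathcal F=\mathcal F_0$ on $\Omega=\{1,2,3,4\}$ with the non-commuting branches $\mathcal F_A,\mathcal F_{B'}$ of Example~\ref{ex:NC}. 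Whatever branch measures satisfy (i)--(iv), every $(w,1-w)$ satisfies them too. Here your $G$ is all of $\mathrm{Sym}(\Omega)$. The $G$-orbit of $\mathcal F_A$ consists of the three $2{+}2$ partitions, which pairwise commute. Any $g$ permuting $\{\mathcal F_A,\mathcal F_{B'}\}$ must fix both, because the block types $2{+}2$ and $3{+}1$ differ. So your invariance requirement imposes nothing.

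Your fallback of weighting by the $\mu$-mass of the split atom is also empty. Atomic refinements of different atoms always commute (Section~\ref{sec7}, and directly checkable from Definition~\ref{CR}). Hence pairwise incompatible $R_i$ all split the same atom, and mass-weighting just returns $1/m$.

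What your argument actually supports is a reformulated statement. Normalisation holds by construction, and $w_i=1/m$ holds only under explicitly added hypotheses: $G$-invariance of the branch weighting, a $G$-stable branch set, and a transitive action on it.

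\textbf{Step~1 is also unsupported.} Theorem~\ref{th:endogenous} only asserts that some maximal pair and some minimal $\mathcal F^\ast$ exist. It says nothing about extending a given $\mu$ on $\mathcal F$ to a given refinement $\mathcal F_i$ within $I(\mathcal F_i)$. Lemma~\ref{lem:restrict} goes the other way, from a pair to its coarsenings.

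The paper's axioms in fact forbid what you need:
\begin{itemize}
\item Two non-commuting $\mathcal F_i,\mathcal F_j$ cannot both lie in $\mathfrak C$ (Axiom~\ref{ax:A}).
\item For $\mathcal F\in\mathfrak C$, Axiom~\ref{ax:C} denies $\mu$ any invariance-preserving extension to $\mathcal F_i\notin\mathfrak C$.
\end{itemize}
Hence for $m\ge 2$ at most one branch can carry the invariant extension $\mu_i$ that your $\nu$ requires.

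Even setting the axioms aside and reading $I$ as $\mathrm{Aut}$-invariance, such extensions can simply fail to exist. Let $\mathcal F$ have atoms $\{1,2,3\},\{4\}$ with $\mu(\{4\})=0.6$. Consider the non-commuting splits into $\{1\},\{2,3\}$ and into $\{2\},\{1,3\}$. In each branch an automorphism swaps the new singleton with $\{4\}$, so invariance forces the singleton to carry mass $0.6$. That exceeds $\mu(\{1,2,3\})=0.4$, so no invariant extension exists in either branch. Existence of invariant branch extensions would therefore also have to be assumed as a hypothesis.
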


\subsection{Compatible refinement families}

A \emph{history} is a maximal chain in the refinement poset generated by
atomic refinements,
\[
\mathcal F_0 \to \mathcal F_1 \to \cdots \to \mathcal F_n .
\]

Two histories $H$ and $H'$ are said to be \emph{coherent} if every
refinement occurring in $H$ is compatible with every refinement
occurring in $H'$ in the sense of Definition~\ref{CR}. This notion
is analogous to the compatibility conditions that define consistent
families of histories in the consistent-histories formulation of
quantum mechanics \cite{Gr}.
\begin{proposition} 
Compatible families of refinements behave as classical substructures
within the global refinement system.
\end{proposition}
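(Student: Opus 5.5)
The statement is informal, so the first step is to fix a precise reading of ``classical substructure.'' I will take a compatible family to be a set $\mathcal K=\{\mathcal F_1,\dots,\mathcal F_m\}\subseteq\mathfrak C$ of pairwise commuting $\sigma$-algebras in the sense of Definition~\ref{CR}, together with all joins and meets they generate. ``Behaving classically'' will mean three things. First, every pair in the generated lattice is commutative. Second, the order in which the refinements of $\mathcal K$ are performed is irrelevant: $\mathcal F_{AB}=\mathcal F_{BA}$, and there is a single common refinement $\mathcal F_{\mathcal K}=\sigma(\mathcal F_1\cup\dots\cup\mathcal F_m)$. Third, any family of pairwise consistent measures on the $\mathcal F_i$ that agree on overlaps is the set of marginals of a single probability measure on $\mathcal F_{\mathcal K}$, exactly as in a classical (Kolmogorov) probability space.

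The plan proceeds in three steps.

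\textbf{Step 1: order-independence.} If $\sim_A\circ\sim_B=\sim_B\circ\sim_A$, then the composite $\sim_A\circ\sim_B$ is an equivalence relation, namely the join of $\sim_A$ and $\sim_B$. It therefore corresponds to the meet $\mathcal F_A\cap\mathcal F_B$. Dually, the atoms of $\mathcal F_A\vee\mathcal F_B$ are the nonempty sets $A_i\cap B_j$. Neither description depends on order, so this gives the proposition $\mathcal F_{AB}=\mathcal F_{BA}$ quoted earlier. By induction on $m$, the join $\mathcal F_{\mathcal K}$ can be reached by any interleaving of atomic refinements of the $\mathcal F_i$. Consequently no two refinements drawn from $\mathcal K$ are related by $\prec$ in the event graph, and the family is an antichain of spacelike events.

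\textbf{Step 2: closure of the lattice.} Here I would invoke the classical fact that pairwise commuting equivalence relations generate a distributive, in fact modular, sublattice of the partition lattice. I would use this fact directly rather than reprove it. It guarantees that joins and meets of elements of $\mathcal K$ again commute with one another. Since $\mathfrak C$ is closed under coarsening (Axiom~\ref{ax:A}), all the meets stay in $\mathfrak C$. For the joins, I would require the top element $\mathcal F_{\mathcal K}$ to lie in $\mathfrak C$; this is part of what ``compatible family'' should mean. Closure under coarsening then puts the entire generated lattice inside $\mathfrak C$.

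\textbf{Step 3: classical joint measure.} This is the main obstacle. Pairwise commutativity of partitions alone does not yield a joint measure extending arbitrary consistent marginals; the Vorob'ev-type obstructions show that marginal problems can fail even in classical settings. I would therefore restrict the claim to the marginals actually produced by the refinement dynamics, that is, to a single measure $\mu$ on $\mathcal F_{\mathcal K}$. The statement then becomes the following: $(\mathcal F_{\mathcal K},\mu)$ is self-consistent, and its restrictions to the $\mathcal F_i$ are self-consistent by Lemma~\ref{lem:restrict}.

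Existence of an invariant $\mu$ on $\mathcal F_{\mathcal K}$ follows from the nonemptiness of $I(\mathcal F_{\mathcal K})$, obtained by averaging over $\mathrm{Aut}(\mathcal F_{\mathcal K})$. With this $\mu$, the whole family $\mathcal K$ sits inside the single probability space $(\Omega,\mathcal F_{\mathcal K},\mu)$, which is what ``classical substructure'' means. Finally, Theorem~\ref{th:endogenous}(i), applied to the compatibility domain consisting of coarsenings of $\mathcal F_{\mathcal K}$, supplies an endogenous pair within this substructure.
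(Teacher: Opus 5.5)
The paper states this proposition without proof, and it is essentially informal, so there is no argument to compare against. Judged on its own, your proposal fails at the one step where compatibility is supposed to do real work: the ``classical fact'' invoked in Step 2 is false.

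\textbf{Step 2 fails.} Pairwise commuting partitions need not generate a lattice of pairwise commuting partitions. On $\Omega=\{1,\dots,7\}$, let $\mathcal F_1,\mathcal F_2,\mathcal F_3$ have atoms $\{1,2,3,4\},\{5,6,7\}$; $\{1,2,5,6\},\{3,4,7\}$; and $\{1,7\},\{3,5\},\{2,4,6\}$. Each atom of any one of them meets each atom of the other two, so the three commute pairwise. Now let $\mathcal G=\sigma(\mathcal F_1\cup\mathcal F_2)$, whose atoms are $\{1,2\},\{3,4\},\{5,6\},\{7\}$. Then $(2,7)\in{\sim_{\mathcal F_3}}\circ{\sim_{\mathcal G}}$ via $y=1$. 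But $(2,7)\notin{\sim_{\mathcal G}}\circ{\sim_{\mathcal F_3}}$, because the $\mathcal F_3$-atom $\{2,4,6\}$ containing $2$ misses $\{7\}$. So $\mathcal G$ and $\mathcal F_3$ do not commute.

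What does survive is closure under $\sigma$-algebra meets. If $\sim_A,\sim_B,\sim_C$ commute pairwise, then $\sim_A\circ\sim_B$, whose classes are the atoms of $\mathcal F_A\cap\mathcal F_B$, commutes with $\sim_C$.

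Distributivity also fails. The algebras $\mathcal A,\mathcal B$ of Example~\ref{ex:NC}, together with $\{\{1,4\},\{2,3\}\}$, generate $M_3$. Moreover, ``distributive, in fact modular'' has the implication backwards.

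\textbf{Requiring $\mathcal F_{\mathcal K}\in\mathfrak C$ does not rescue this.} Via Axiom~\ref{ax:A} it would give commutativity for free, but it trivialises the family. Suppose some element of $\mathfrak C$ had at least three atoms $P_1,P_2,P_3,\dots$. Its coarsenings $\{P_1\cup P_2,\,P_3\cup\cdots\}$ and $\{P_2,\,P_1\cup P_3\cup\cdots\}$ would both lie in $\mathfrak C$ and fail to commute, exactly as in Example~\ref{ex:NC}. Hence every element of $\mathfrak C$ has at most two atoms, and your $\mathcal K$ contains at most one nontrivial $\sigma$-algebra.

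\textbf{The correct parts never use commutativity.} Because of this, they cannot be what the proposition asserts.
\begin{itemize}
\item $\sigma(\mathcal F_A\cup\mathcal F_B)$ is symmetric for every pair.
\item The non-commuting $\mathcal A,\mathcal B'$ of Example~\ref{ex:NC} also reach their join in either order: $\mathcal F_0\to\mathcal A\to\sigma(\mathcal A\cup\mathcal B')$ and $\mathcal F_0\to\mathcal B'\to\sigma(\mathcal A\cup\mathcal B')$ are both chains of atomic steps.
\item Every family whatsoever sits in the single space $(\Omega,\sigma(\bigcup_i\mathcal F_i),\mu)$, with $\mu$ obtained by averaging.
\end{itemize}
After your retreat in Step 3, everything you prove holds equally for incompatible families.

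Two further points. First, the antichain claim does not follow from the paper's definition of $\prec$. Its nodes are steps between specific $\sigma$-algebras, and two steps in a common chain are always comparable, since a chain is totally ordered by inclusion. Second, Lemma~\ref{lem:restrict} is not safe to cite. Take $\mathcal F$ with atoms $\{1\},\{2\},\{3,4\}$ and weights $0.1,0.1,0.8$. This measure is $\mathrm{Aut}(\mathcal F)$-invariant, but its restriction to $\{\{1,2\},\{3,4\}\}$ is not invariant under $(13)(24)$.

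\textbf{A precise statement that does use compatibility} is pairwise gluing. The relations $\sim_A$ and $\sim_B$ commute iff, inside each atom $C$ of $\mathcal F_A\cap\mathcal F_B$, every atom of $\mathcal F_A$ meets every atom of $\mathcal F_B$. This in turn holds iff any $\mu_A,\mu_B$ agreeing on $\mathcal F_A\cap\mathcal F_B$ extend jointly to $\sigma(\mathcal F_A\cup\mathcal F_B)$.
\begin{itemize}
\item For the forward direction, set $\mu(A_i\cap B_j)=\mu_A(A_i)\mu_B(B_j)/\mu_A(C)$, taken to be $0$ when $\mu_A(C)=0$.
\item For the converse, concentrate $\mu_A$ on $A_i$ and $\mu_B$ on a disjoint $B_j$ lying in the same $C$; no joint measure can give $A_i\cap B_j=\varnothing$ mass $1$.
\end{itemize}
For three or more $\sigma$-algebras, pairwise commutativity is not enough. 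In the $M_3$ example, giving each of $\{1,2\}$, $\{1,3\}$, $\{1,4\}$ mass $0.1$ is pairwise consistent but admits no joint measure. A family-level version therefore needs an extra Vorob'ev-type acyclicity hypothesis, which your reading never supplies.
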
 
These coherent families play a role analogous to the “pointer histories”
that arise in the theory of quantum Darwinism, where stable classical
records emerge from the proliferation of compatible correlations
\cite{ZurekQD}. In the present framework they correspond to sets of
refinements whose distinctions can be consistently propagated through
further refinement steps. Incompatible histories cannot jointly survive
the branching structure.

\subsection{Summary: Time as the Growth of Distinctions} 
We may now summarise:
\begin{enumerate}
\item events correspond to atomic refinements,
\item refinement chains generate a partial order,
\item compatibility corresponds to commuting refinements,
\item incompatible refinements induce ordering constraints.
\end{enumerate}
Thus time, causality, and quantum behaviour all arise from the dynamics of making distinctions in an initially undifferentiated informational continuum.

\section{A Minimal Toy Model of Refinement Dynamics}\label{sec6}
In this section we construct a minimal explicit example illustrating the
refinement framework developed above. The example demonstrates:
\begin{enumerate}[leftmargin=2em]
\item how distinctions arise through successive refinements,
\item how non-unique extensions of measures appear,
\item how compatibility and incompatibility of refinements arise,
\item how repeated compatible refinements generate stable structures.
\end{enumerate}
Because $\Omega$ is finite, each $\sigma$-algebra corresponds to a
partition of $\Omega$, and refinements correspond to refinements of
partitions.
\subsection{The undifferentiated initial state}
Let the initial universe be the trivial measurable space:
$$
(\Omega,\mathcal F_0), \qquad
\Omega=\{1,2,3,4\},\quad
\mathcal F_0=\{\emptyset,\Omega\}.
$$
At this stage no distinctions are present: the entire space forms a single
atom of $\mathcal F_0$.
We assign the trivial measure:
$$
\mu_0(\Omega)=1.
$$
No additional structure is present.

\subsection{First distinctions: a binary refinement} 
Consider the first possible event:
$$
P_A = \big\{\{1,2\},\{3,4\}\big\}.
$$
We interpret this as the emergence of a distinction labelled “A”. The $\sigma$-algebra generated by this partition is:
$$
\mathcal F_A = \sigma(P_A)
= \{\emptyset,\{1,2\},\{3,4\},\Omega\}.
$$
A measure extension from $\mu_0$ assigns:
$$
\mu_A(\{1,2\}) = p,\qquad
\mu_A(\{3,4\}) = 1-p, \qquad p\in[0,1].
$$
The extension is not unique: any $p\in[0,1]$ yields a valid measure on
$\mathcal F_A$.

The refinement:
$$
\mathcal F_0 \prec \mathcal F_A
$$
is the first step in the refinement chain. Time is nothing more than the existence of this ordered refinement.

\subsection{Commutative second refinement: a space-like relation} 
Now consider a second distinction:
$$
P_B=\big\{ \{1,3\},\{2,4\}\big\}.
$$
The $\sigma$-algebra generated by both partitions is:
$$
\mathcal F_{AB} = \sigma(P_A \cup P_B)
= \{\emptyset, \{1\},\{2\},\{3\},\{4\}, \Omega\}.
$$
The atoms are singletons:
$$
\{1\},\{2\},\{3\},\{4\},
$$
which form the Cartesian products:
$$
\{1,2\}\cap\{1,3\}=\{1\},\quad
\{1,2\}\cap\{2,4\}=\{2\},\quad
\{3,4\}\cap\{1,3\}=\{3\},\quad
etc.
$$
The refinements $A$ and $B$ commute in the sense of
Definition~\ref{CR}.

The distinction introduced by $B$ does not force or forbid any distinction in $A$; their atoms combine cleanly.
Thus the refinements are compatible: they can be realised jointly and
generate the common refinement $\mathcal F_{AB}$. This is the algebraic shadow of “two spatially separated measurements can be made together.”

The measure extension from $\mu_A$ to $\mu_{AB}$ can be chosen freely:
$$
\mu_{AB}(\{i\}) = p_i,\quad i=1,2,3,4,
$$
subject to
$$
p_1+p_2 = \mu_A(\{1,2\}) = p,\qquad
p_3+p_4 = 1-p.
$$
Within this freedom lies the universe’s second moment of “choice.”

\subsection{A non-commutative refinement: temporal ordering} 
Now consider a different second refinement:
$$
P_{B'} = \big\{\{1,3,4\},\{ 2\}\big\}.
$$
The intersections with the atoms of $P_A$ are:
\begin{itemize}
\item $\{1,2\} \cap \{1,3,4\} = \{1\}$,
\item $\{1,2\} \cap \{2\} = \{2\}$,
\item $\{3,4\} \cap \{1,3,4\} = \{3,4\}$,
\item $\{3,4\} \cap \{2\} = \varnothing$.
\end{itemize}
Thus $P_{B'}$ forces a partial refinement of $P_A$:
$$
\{1,2\} \ \text{is forced to split into } \{1\},\{2\},
$$
but the atom (\{3,4\}) remains intact. This violates the definition of commutativity as explained in Example \ref{ex:NC}. 
Intuitively,
$P_{B'}$ is incompatible with $P_A$ because it refines some of $A$’s distinctions but not all, so the corresponding equivalence relations do not commute.
Therefore:
\begin{itemize}
\item	The joint $\sigma$-algebra $\sigma(\mathcal F_A\cup\mathcal F_{B'})$
exists but introduces additional atoms beyond those appearing in
either partition individually, 
\item	the sequence $A \rightarrow B'$ is allowed,
\item	the simultaneous refinement $A\sim B'$ is forbidden.
\end{itemize}
Thus $P_A$ and $P_{B'}$ do not commute in the sense of
Definition~\ref{CR}. Their refinements therefore cannot be realised
simultaneously without introducing additional distinctions. In the refinement framework this incompatibility induces an ordering constraint between the corresponding refinement steps.

\subsection{Summary of the toy universe} 
This simple example illustrates several structural features of the
refinement framework:

\begin{itemize}[leftmargin=2em]
\item successive refinements generate a partial ordering of distinctions,
\item compatible refinements admit joint realisations,
\item incompatible refinements cannot be realised simultaneously,
\item measure extensions need not be unique.
\end{itemize}

\section{General refinement theory}\label{sec7}

In the preceding sections we introduced refinements of $\sigma$-algebras
on a finite set and the compatibility relations governing such
refinements. We now give a more systematic description of refinement
dynamics: the space of allowed refinement operations, the algebraic
structure they generate, and the behaviour of sequences of refinements.

Because refinements act on individual atoms of a partition but may
affect subsequent refinements, a general theory raises three natural
questions:
\begin{enumerate}
\item	What is the space of allowed local refinement moves?
\item	How do these moves compose, commute or obstruct one another?
\item	Under what conditions do refinement sequences converge to stable or universal structures?
\end{enumerate}
We treat each in turn.

\subsection{Refinement operators}

Let $\mathcal F$ be a $\sigma$-algebra on the finite set $\Omega$.
A refinement operator is a map
\[
R : \mathcal F \longrightarrow \mathcal G
\]
where $\mathcal G$ is obtained by subdividing one or more atoms of
$\mathcal F$.

Elementary refinement operators correspond to atomic refinements,
i.e. operations that subdivide a single atom while leaving all others
unchanged.

For each atom $A$ of $\mathcal F$ we denote by
\[
\mathcal R_A = \{ R_A^{(i)} : i\in I_A \}
\]
the collection of allowed atomic refinements of that atom.
The global refinement semigroup is then
\[
\mathcal R = \langle \mathcal R_A : A\in\mathrm{Atoms}(\mathcal F) \rangle,
\]
consisting of finite compositions of atomic refinement operators.

\subsection{Compatibility relations}

Two refinement operators $R_A$ and $R_B$ acting on atoms
$A$ and $B$ commute whenever $A$ and $B$ are disjoint atoms
of the current partition. In this case
\[
R_A R_B = R_B R_A .
\]

If the refinements act on overlapping or dependent atoms,
the resulting operators need not commute. Non-commuting
refinements therefore introduce ordering constraints on
refinement sequences.

\subsection{Constraint propagation}

Refinements performed at one stage may influence the
availability of subsequent refinements. For example,

\begin{itemize}
\item subdividing an atom may introduce new atoms on which
further refinements may act;

\item certain refinement patterns may prevent subsequent
refinements that would violate compatibility constraints.
\end{itemize}

Thus sequences of refinements generate a constrained
dynamical system on the lattice of $\sigma$-algebras.

\subsection{Stability of refinement sequences}

Given a sequence of refinement operators
\[
R_1,R_2,\dots ,
\]
the resulting sequence of $\sigma$-algebras
\[
\mathcal F_n = R_n R_{n-1}\cdots R_1(\mathcal F_0)
\]
forms a chain in the refinement poset.
Such sequences may exhibit several types of behaviour:

\begin{enumerate}
\item stabilisation, when no further admissible refinements exist;
\item periodic behaviour in the pattern of refinements;
\item indefinite growth of the refinement structure.
\end{enumerate}

These possibilities illustrate the range of dynamical behaviours that
can arise within the refinement framework. In particular, the
structure of admissible refinements determines both the causal
ordering of events and the evolution of the endogenous probability
measure associated with the system.

Taken together, the results of this paper suggest a general picture in
which distinctions, causal structure, and probability arise together
from the dynamics of refinement of $\sigma$-algebras. In this sense, probabilistic structure is not imposed externally but emerges from the progressive resolution of distinctions.

\bibliographystyle{aipnum4-2}
\bibliography{references}

@article{BLS,
  author = {Bombelli, L. and Lee, J. and Meyer, D. and Sorkin, R.D.},
  title = {Spacetime as a causal set},
  journal = {Phys. Rev. Lett.},
  volume = {59},
  pages = {521--524},
  year = {1987}
}

@article{BvN,
  author = {Birkhoff, G. and von Neumann, J.},
  title = {The logic of quantum mechanics},
  journal = {Ann. Math.},
  volume = {37},
  number = {4},
  pages = {823--843},
  year = {1936}
}

@article{BK,
  author  = {Bogolyubov, N. N. and Krylov, N. M.},
  title   = {La théorie générale de la mesure dans son application à l’étude des systèmes dynamiques de la mécanique non linéaire},
  journal = {Annals of Mathematics},
  series  = {2},
  volume  = {38},
  number  = {1},
  pages   = {65--113},
  year    = {1937}
}

@article{P,
  author  = {Prokhorov, Yu. V.},
  title   = {Convergence of random processes and limit theorems in probability theory},
  journal = {Theory of Probability and Its Applications},
  volume  = {1},
  number  = {2},
  pages   = {157--214},
  year    = {1956}
}

@article{Gr,
author = {Griffiths, R.},
title = {Consistent histories and the interpretation of quantum mechanics},
journal = {Journal of Statistical Physics},
volume = {36},
pages = {219--272},
year = {1984}
}

@book{Pe,
  author    = {Petersen, Karl},
  title     = {Ergodic Theory},
  publisher = {Cambridge University Press},
  year      = {1983}
}

@article{ZurekQD,
  author = {Zurek, Wojciech H.},
  title = {Quantum Darwinism},
  journal = {Nature Physics},
  volume = {5},
  pages = {181--188},
  year = {2009}
}

\end{document}